\theoremstyle{plain}
\newtheorem{theorem}                 {Theorem}      [section]
\newtheorem{proposition}  [theorem]  {Proposition}
\newtheorem*{theorem*}{Theorem}
\theoremstyle{definition}
\newtheorem{example}      [theorem]  {Example}
\newtheorem{remark}       [theorem]  {Remark}
\newtheorem{definition}   [theorem]  {Definition}
\renewcommand{\Re}{\mathcal Re}
\renewcommand{\Im}{\mathcal Im}
\renewcommand{\r}{\mathbb R}
\newcommand{\C}{\mathbb C}
\newcommand{\h}{\mathbb H}
\renewcommand{\L}{\mathbb{L}}
\newcommand{\K}{\mathbb{K}}
\begin{document}
\title{Enneper representation of minimal surfaces in the three-dimensional Lorentz-Minkowski space}

\author{Irene I. Onnis}
\address{Departamento de Matem\'{a}tica\\ ICMC/USP-Campus de S\~ao Carlos\\
Caixa Postal 668\\ 13560-970 S\~ao Carlos, SP, Brazil}
\email{onnis@icmc.usp.br}

\author{Adriana A. Cintra}
     \address{Departamento de Matem\'{a}tica, C.P. 03, UFG, 75801-615, Jata\'{i}, GO, Brazil}
\email{adriana.cintra@ufg.br}
          
\keywords{Lorentz-Minkowski space, Minimal surfaces, Enneper immersions, Weierstrass representation.}

\thanks{The second author was supported by grant 2015/00692-5, S\~ao Paulo Research Foundation (Fapesp).}

\begin{abstract}
In this paper, we will give an Enneper-type representation for spacelike and timelike minimal surfaces in the Lorentz-Minkowski space $\L^{3}$, using the complex and the paracomplex analysis (respectively). Then, we exhibit various examples of minimal surfaces in $\L^{3}$ constructed via the Enneper representation formula, that it is equivalent to the Weierstrass representation obtained by Kobayashi (for spacelike immersions) and by Konderak (for the timelike ones).
\end{abstract}

\maketitle

\section{Introduction}

The  Weierstrass representation formula for minimal surfaces in
$\r^3$ is a powerful tool to construct examples and to
prove general properties of such surfaces, since it gives a
parametrization of minimal surfaces by holomorphic data. In \cite{MMP} the authors describe
a general Weierstrass representation formula for simply connected immersed minimal surfaces in an arbitrary Riemannian manifold. The partial differential equations
involved are, in general, too complicated to find explicit solutions.
However, for particular ambient $3$-manifolds, such as the Heisenberg
group, the hyperbolic space and the product of the hyperbolic plane
with $\r$, the equations become simpler and the formula can be
used to construct examples of conformal minimal immersions (see \cite{K}, \cite{MMP}).

In \cite{a},  Andrade introduces a new method to obtain minimal surfaces in the Euclidean $3$-space which is equivalent to the classical Weierstrass representation and, also, he proves that any immersed minimal surfaces in $\r^{3}$ can be obtained using it.
This method has the advantage of computational simplicity, with respect to the Weierstrass representation  formula, and it allows to construct a conformal minimal immersion $\psi:\Omega\subset \C\to \C\times\r$, from a harmonic function $h:\Omega\to\r$,  provided that we choose holomorphic complex valued functions $L,P$ on the simply connected domain $\Omega$ such that $L_z\,P_z=(h_z)^{2}.$ The immersion results in $\psi(z)=(L(z)-\overline{P(z)},h(z))$ and it is called {\it Enneper immersion} associated to $h$. Besides, the image $\psi (\Omega)$ is called an  {\it Enneper graph} of $h$. 

Some extensions of the Enneper-type representation in others ambient spaces have been given in  \cite{benoit} and  \cite{MO}.
The aim of this paper is to discuss an Enneper-type representation for minimal surfaces
in the Lorentz-Minkowski space $\L^3$, i.e. the affine three space $\mathbb{R}^3$ endowed with the Lorentzian metric
$$g=dx_1^2+dx_2^2-dx_3^2.$$

In the space $\L^{3}$ a Weierstrass representation type theorem was proved by Kobayashi for  spacelike minimal immersions (see \cite{Kob}),
and by Konderak  for the case of timelike minimal surfaces (see \cite{konderak}). The results of Konderak have been generalized by Lawn in \cite{l}.
Recently, these theorems were extended for immersed minimal surfaces in Riemannian and Lorentzian three-dimensional manifolds
 by Lira et al. (see \cite{Liramm}).

The paper is organized as follows. In Section~\ref{algL} we recall some basics facts of Lorentzian calculus, which plays the role of complex calculus in the classical case, for timelike minimal surfaces. Section~\ref{weier} is devoted to present a Weierstrass type representation for minimal surfaces in the three-dimensional Lorentz-Minkowski  space. We will treat the cases of spacelike and timelike minimal surfaces (given in \cite{Kob} and \cite{konderak}, respectively) in an unified approach (see Theorem~\ref{teo1}). In the Sections~\ref{four} and \ref{five} we give an Enneper-type representation for spacelike and timelike minimal surfaces in $\L^3$, using the complex and the paracomplex analysis, respectively (see Theorems~\ref{teo-spacelike} and \ref{teo-timelike}). Besides, we show that any spacelike (respectively, timelike) minimal surface in $\L^3$ can be, locally, rendered as the Enneper graph of a real valued harmonic function defined on a (para)complex domain (see Theorems~\ref{rend1} and \ref{rend2}). In addition, we use these results to provide a description of the spacelike (respectively, timelike) helicoids and catenoids given in \cite{ACM,Kob} in terms of their (para)complex Enneper data.
Finally, in Section~\ref{final} we use the Enneper-type representation to construct new interesting examples of minimal surfaces in $\L^3$ and, also, we explain how to produce new minimal surfaces starting from the Enneper data of known minimal surfaces.

\section{The algebra $\L$ of the paracomplex numbers}\label{algL}

In \cite{konderak}, the author uses paracomplex analysis to prove a Weierstrass representation formula for timelike minimal surfaces immersed in the space $\L^3$.
We recall that the algebra of {\em paracomplex (or Lorentz) numbers} is the algebra $$\L = \{a + \tau\, b\;|\; a,b \in \r\},$$ where $\tau$ is an imaginary
unit with $\tau^2 = 1$. The two internal operations are the obvious ones. We define the conjugation in $\L$ as $\overline{a + \tau\, b} := a - \tau\, b$ and the
$\L$-norm of $z = a + \tau\, b \in \L$ is defined by $|z| = |z\,\overline{z}|^{\frac{1}{2}} = |a^2 - b^2|^{\frac{1}{2}}.$
The algebra $\L$ admits the set of zero divisors $K = \{a\pm \tau\,a \,:\, a~\neq~0\}$.
If $z\notin K\cup\{0\}$, then it is invertible with inverse $\displaystyle z^{-1} = \bar{z}/(z\bar{z})$.
We observe that $\L$ is isomorphic to the algebra $\r \oplus \r$ via the map
$
\Phi(a + \tau\, b) = (a + b, a - b)
$
and the inverse of this isomorphism is given by $\Phi^{-1}(a,b)=(1/2)\,[(a+b)+\tau (a-b)]$.
Also, $\L$ can be canonically endowed with an indefinite metric by
$$\langle z,w\rangle= \Re\, (z\,\bar{w}), \qquad z,w\in \L.$$
In the following, we introduce the notion of the differentiability
over Lorentz numbers and some properties (look \cite{konderak2} for more details). 
\begin{definition}
Let $\Omega\subseteq \L$ be an open set\footnote{ The set $\L$ has a natural topology since it's a two dimensional real vector space.} and $z_0 \in \Omega$.
The $\L$-derivative of a function $f:\Omega\rightarrow\L$ at $z_0$ is defined by
\begin{equation}\label{eq:A11}
f'(z_0):= \lim_{z\rightarrow z_{0} \atop{z - z_{0} \in \L\setminus K\cup\{0\}}}\frac{f(z) - f(z_0)}{z - z_0},
\end{equation}
if the limit exists. If $f'(z_0)$ exists, we will say that $f$ is $\L$-differentiable
at $z_0$. When $f$ is $\L$-differentiable at all points of $\Omega$ we say that $f$ is $\L$-holomorphic in $\Omega$.
\end{definition}

\begin{remark}
The condition of $\L$-differentiability is much less restrictive that the usual complex differentiability.
For example, $\L$-differentiability at $z_0$ does not imply continuity at $z_0$. However, $\L$-differentiability
in an open set $\Omega \subset \L$ implies usual differentiability in $\Omega$. Also, we point out that there exist $\L$-differentiable functions of any class of usual differentiability (see \cite{konderak2}).
\end{remark}
 Introducing the paracomplex operators:
\begin{equation}\label{dpc}
\frac{\partial}{\partial z} = \frac{1}{2}\Big(\frac{\partial}{\partial u} + \tau\frac{\partial}{\partial v}\Big),\qquad
\frac{\partial}{\partial \bar{z}} = \frac{1}{2}\Big(\frac{\partial}{\partial u} - \tau\frac{\partial}{\partial v}\Big),\nonumber
\end{equation}
where $z = u + \tau\, v$, we can give a necessary and sufficient condition for the $\L$-differentiability of a function $f$ in some open set.
\begin{theorem} Let $a,b:\Omega\to\L$ be $C^1$ functions in an open set $\Omega\subset\L$. Then the function $f(u,v) = a(u,v) +\tau\, b(u,v)$, $u + \tau v\in\Omega$, is $\L$-holomorphic in $\Omega$ if and only if \begin{equation}\label{eqldif}
\displaystyle\frac{\partial f}{\partial \bar{z}} = 0
\end{equation}
is satisfied at all point of $\Omega$.
\end{theorem}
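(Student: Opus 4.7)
The plan is to mirror the classical complex-analytic proof of the Cauchy--Riemann characterization of holomorphy, but to track the zero-divisor locus $K$ carefully at the places where it matters.

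For the necessity of $\partial f/\partial\bar z=0$, I would assume $f$ is $\L$-differentiable at $z_0$ and compute $f'(z_0)$ along two admissible paths whose increments lie outside $K\cup\{0\}$. Taking $z=z_0+t$ with $t$ a nonzero real yields $f'(z_0)=(\partial f/\partial u)(z_0)$; taking $z=z_0+\tau s$ with nonzero real $s$ (which is not a zero divisor since $(\tau s)\overline{(\tau s)}=-s^2\neq 0$) yields $f'(z_0)=\tau^{-1}(\partial f/\partial v)(z_0)=\tau\,(\partial f/\partial v)(z_0)$. Equating the two expressions gives $\partial f/\partial u=\tau\,\partial f/\partial v$, which by the definition of the paracomplex operators is precisely $\partial f/\partial\bar z=0$.

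For the converse I would start from the first-order Taylor expansion (justified by the $C^1$ hypothesis)
\[
f(z)-f(z_0)=\frac{\partial f}{\partial u}(z_0)(u-u_0)+\frac{\partial f}{\partial v}(z_0)(v-v_0)+R(z),
\]
with $R(z)=o(\|z-z_0\|_{\mathrm{Euc}})$. Using $\partial_u=\partial_z+\partial_{\bar z}$ and $\tau\,\partial_v=\partial_z-\partial_{\bar z}$, the linear part rewrites as $(\partial f/\partial z)(z_0)(z-z_0)+(\partial f/\partial\bar z)(z_0)\overline{(z-z_0)}$; the assumption $\partial f/\partial\bar z=0$ kills the second term.

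The main obstacle is then to show $R(z)/(z-z_0)\to 0$ as $z\to z_0$ with $z-z_0\notin K\cup\{0\}$. The difficulty is that $(z-z_0)^{-1}=\overline{z-z_0}/((u-u_0)^{2}-(v-v_0)^{2})$ can be unbounded when $z-z_0$ approaches $K$ even while $\|z-z_0\|_{\mathrm{Euc}}$ stays small, so a naive $o$-estimate is not immediately adequate. I would resolve this by using the ring isomorphism $\Phi:\L\to\r\oplus\r$, $\Phi(a+\tau b)=(a+b,a-b)$, which intertwines $\L$-multiplication and inversion with the componentwise operations on $\r\oplus\r$ and sends $K\cup\{0\}$ onto the coordinate axes. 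Writing $\Phi\circ f\circ\Phi^{-1}=(F_1,F_2)$ in coordinates $\xi=u+v$, $\eta=u-v$, the equation $\partial f/\partial\bar z=0$ translates into $\partial_\eta F_1=0$ and $\partial_\xi F_2=0$, so locally $F_1$ depends only on $\xi$ and $F_2$ only on $\eta$. The difference quotient then becomes
\[
\Phi\!\left(\frac{f(z)-f(z_0)}{z-z_0}\right)=\left(\frac{F_1(\xi)-F_1(\xi_0)}{\xi-\xi_0},\,\frac{F_2(\eta)-F_2(\eta_0)}{\eta-\eta_0}\right),
\]
and the constraint $z-z_0\notin K$ is exactly $\xi\neq\xi_0$ and $\eta\neq\eta_0$. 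Each entry now admits a limit by elementary one-variable $C^1$ calculus, producing $f'(z_0)=(\partial f/\partial z)(z_0)$ and completing the proof.
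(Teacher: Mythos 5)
Your proof is correct, and you have correctly identified the one genuine difficulty: the naive little-$o$ estimate for the remainder fails because $(z-z_0)^{-1}$ is unbounded as $z-z_0$ approaches the null cone $K$ even while the Euclidean norm of $z-z_0$ tends to zero. Passing to the isomorphism $\Phi:\L\to\r\oplus\r$ (equivalently, to the null coordinates $\xi=u+v$, $\eta=u-v$), under which the para-Cauchy--Riemann system decouples into $\partial_\eta F_1=0$, $\partial_\xi F_2=0$ and the admissibility constraint $z-z_0\notin K\cup\{0\}$ becomes exactly $\xi\neq\xi_0$ and $\eta\neq\eta_0$, reduces everything to two one-variable difference quotients and resolves the issue cleanly. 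Note that the paper itself states this theorem without proof, deferring to Di Terlizzi--Konderak--Lacirasella, so there is no in-paper argument to compare against; your route is the standard one in that reference and the necessity direction (evaluating the limit along the real and $\tau$ directions, both of which avoid $K$) is also fine. The only point worth making explicit is that the step ``$F_1$ depends only on $\xi$'' requires working on a neighborhood of $z_0$ whose slices in the $\eta$ (respectively $\xi$) direction are connected, e.g.\ a small box around $z_0$; since you only need the conclusion locally to compute the limit at $z_0$, your ``locally'' already covers this.
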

Observe that the condition~\eqref{eqldif} is equivalent to 
the para-Cauchy-Riemann equations
$$
a_u = b_v,\quad
a_v =b_u
$$
and, in this case, we have that
$$\begin{aligned}f'(z)&=a_u(u,v)+\tau\,b_u(u,v)=b_v(u,v)+\tau\,a_v(u,v)\\&=\frac{1}{2}\Big(\frac{\partial}{\partial u} + \tau\frac{\partial}{\partial v}\Big)(f).\end{aligned}$$
\begin{remark}
If $f$ is a  $\L$-differentiable function, from the  para-Cauchy-Riemann equations we have that
\begin{equation}\label{deri}
f_z=2 (\Re f)_z=2\tau (\Im f)_z.
\end{equation}
\end{remark}
We finish this part by considering the following result which will be useful later. 
\begin{proposition}\label{integral}
Let $h:\Omega\to \r$ be a function defined in the simply connected open set  $\Omega\subset\K$. Then,
$$h(z)-h(z_0)=2\,\Re \int_{z_0}^{z} h_z(z)\,dz,$$
where the integration is performed in paths contained in $\Omega$ from $z_0$ to $z$.
\end{proposition}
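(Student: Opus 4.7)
The plan is to verify directly that the real-valued $1$-form obtained from $h_z\,dz$ by taking the real part is nothing but $\tfrac{1}{2}\,dh$, so that the claimed identity reduces to the fundamental theorem of calculus along a path.

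First I would write $z = u + \tau v$, so that $dz = du + \tau\,dv$, and expand using the paracomplex operator recalled above:
$$h_z\,dz \;=\; \tfrac{1}{2}(h_u + \tau\,h_v)(du + \tau\,dv)
\;=\; \tfrac{1}{2}\bigl[(h_u\,du + h_v\,dv) + \tau\,(h_v\,du + h_u\,dv)\bigr],$$
using $\tau^{2}=1$. Taking the real part yields
$$\Re\bigl(h_z\,dz\bigr) \;=\; \tfrac{1}{2}(h_u\,du + h_v\,dv) \;=\; \tfrac{1}{2}\,dh,$$
which uses only that $h$ is real-valued and $C^{1}$.

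Next I would observe that the $\r$-valued $1$-form $2\,\Re(h_z\,dz) = dh$ is exact on $\Omega$, hence its integral along any $C^{1}$ path from $z_0$ to $z$ equals $h(z)-h(z_0)$ by the fundamental theorem of calculus. In particular the value of $\int_{z_0}^{z} 2\,\Re(h_z\,dz)$ is path-independent; simple connectedness of $\Omega$ is not strictly needed here once exactness is established, but it guarantees that any two paths between $z_0$ and $z$ can be continuously deformed, which makes the statement geometrically natural.

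The main (and essentially only) obstacle is bookkeeping: being careful that one is taking the real part in the paracomplex sense $\Re(a+\tau b)=a$ and not confusing it with the usual complex real part, and keeping the factor $\tfrac{1}{2}$ from the definition of $\partial/\partial z$ straight. Once the identity $2\,\Re(h_z\,dz)=dh$ is in hand, the conclusion is immediate.
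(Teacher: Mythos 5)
Your proof is correct and follows essentially the same route as the paper: the paper writes $h(z)-h(z_0)=\int h_z\,dz+\int h_{\bar z}\,d\bar z$ and uses that the second integral is the conjugate of the first (since $h$ is real), while you expand $h_z\,dz$ in the coordinates $u,v$ to see directly that $2\,\Re(h_z\,dz)=dh$ --- the same two-line computation in different notation. Both arguments then conclude by the fundamental theorem of calculus applied to the exact form $dh$.
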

\begin{proof}
As $h$ is a real valued function, we have that
$\int_{z_0}^{z} h_{\bar{z}}\,d\bar{z}=\overline{\int_{z_0}^{z} h_z\,dz}.$
Therefore, $$h(z)-h(z_0)=\int_{z_0}^{z} h_z\,dz+\int_{z_0}^{z} h_{\bar{z}}\,d\bar{z}=2\,\Re \int_{z_0}^{z} h_z(z)\,dz.
$$
\end{proof}
\subsection{Some elementary functions over the Lorentz numbers}\label{formulas}
In the following, we shall write functions of the Lorentz variable $z=u +\tau v$ in the ``sans serif style'' to distinguish theme from the respective complex classical functions whose domain is contained in $\C$. In \cite{konderak2} the authors define the exponential function  
$$\mathsf{exp}(z):=e^u\,(\cosh v+\tau\,\sinh v),\quad z\in\L.$$ Putting $u=0$, we obtain 
$$\mathsf{exp}(\tau\,v)=\cosh v+\tau\,\sinh v, \quad \mathsf{exp}(-\tau\,v)=\cosh v-\tau\,\sinh v$$ and
$$
\cosh v=\frac{\mathsf{exp}(\tau v)+\mathsf{exp}(-\tau v)}{2},\quad \sinh v=\frac{\mathsf{exp}(\tau v)-\mathsf{exp}(-\tau v)}{2\tau}.
$$
These expressions may be used to continue hyperbolic cosine and sine as $\L$-holomorphic functions in
the whole set $\L$ setting
$$
\mathsf{cosh} (z):=\frac{\mathsf{exp}(\tau z)+\mathsf{exp}(-\tau z)}{2},\quad
\mathsf{sinh} (z):=\frac{\mathsf{exp}(\tau z)-\mathsf{exp}(-\tau z)}{2\tau },
$$
for all $z\in \L$. It's easy to check the following formulas
\begin{equation}
\begin{aligned}
\mathsf{cosh} (z)&=\cosh u\,\cosh v+\tau\, \sinh u\,\sinh v,\\
\mathsf{sinh} (z)&=\sinh u\,\cosh v+\tau\,\cosh u\,\sinh v.
\end{aligned}
\end{equation}
We observe that $$\mathsf{exp}(\tau\,z)=\mathsf{cosh} (z)+\tau\,\mathsf{sinh} (z)$$ and $\mathsf{sinh}' (z)=\mathsf{cosh} (z),$ $\mathsf{cosh}'(z)=\mathsf{sinh} (z)$, for all $z\in\L.$
Also, 
\begin{equation}\label{proprieta}
\mathsf{cosh} (\tau z)=\mathsf{cosh} (z), \quad \mathsf{sinh} (\tau z)=\tau\,\mathsf{sinh} (z),\quad z\in\L.
\end{equation}
Extending \eqref{proprieta} to circular trigonometric functions and applying the usual angle addition formulas, we define
\begin{equation}
\begin{aligned}
\mathsf{sin} (z)&:=\sin u\,\cos v+\tau\,\cos u\,\sin v,\\
\mathsf{cos} (z)&:=\cos u\,\cos v-\tau\, \sin u\,\sin v,\quad z\in\L.
\end{aligned}
\end{equation}
These functions are $\L$-differentiables in $\L$ and they satisfy the same differentiation formulas which hold
for real and complex variables.

\section{The Weierstrass representation formula in $\L^3$}\label{weier}
We will denote by $\K$ either the complex numbers $\C$ or the paracomplex numbers $\L$, and by $\Omega\subset \K$  an open set. Given  a smooth immersion $\psi:\Omega\subset \K \rightarrow \L^3$, we endow $\Omega$ with the induced metric $ds^{2}=\psi^{*} g$, that makes $\psi$ an isometric immersion. We will say that $\psi$ is {\it spacelike} if  $ds^{2}$ is a Riemannian metric, and that
$\psi$ is {\it timelike} if the induced metric is a Lorentzian metric.

We observe that in the Lorentzian case, we can endow $\Omega$ with paracomplex isothermic coordinates and, as in the Riemannian case, they are locally described by paracomplex isothermic charts with conformal changes of coordinates (see \cite{tila}). Let $z=u+i\,v$ (respectively, $z=u+\tau\,v$) be a complex (respectively, paracomplex) isothermal coordinate in $\Omega$, 
so that
$$ds^2\Big(\dfrac{\partial}{\partial u},\dfrac{\partial}{\partial u}\Big)=\varepsilon \,ds^2\Big(\dfrac{\partial}{\partial v},\dfrac{\partial}{\partial v}\Big),\qquad ds^2\Big(\dfrac{\partial}{\partial u},\dfrac{\partial}{\partial v}\Big)=0,$$
where $\varepsilon=1$ (respectively, $\varepsilon=-1$).
It follows that 
there exists a positive function $\lambda:\Omega\to\r$ such that the induced metric is given by $ds^{2}=\lambda\,(du^{2}+\varepsilon\, dv^{2})$, where
 \begin{equation}\label{lambda}
\lambda=\frac{g(\psi_u,\psi_u)+\varepsilon g(\psi_v,\psi_v)}{2}=2\,g( \psi_z,\psi_{\bar{z}}).
\end{equation}
Observe that the Beltrami-Laplace operator (with respect to $ds^{2}$) is given by:
\begin{equation}\label{beltrami}
\triangle=\frac{1}{\lambda}\,\Big(\dfrac{\partial}{\partial u}\,\dfrac{\partial}{\partial u}+\varepsilon\, \dfrac{\partial}{\partial v}\,\dfrac{\partial}{\partial v}\Big)=\frac{4}{\lambda}\,\dfrac{\partial}{\partial \bar{z}}\,\dfrac{\partial}{\partial z}.
\end{equation}
Also, denoting by $N$ the unit normal vector field along $\psi$, which is timelike (respectively, spacelike) if $\psi$ is a spacelike (respectively, timelike) immersion (i.e. $g(N,N)=-\varepsilon$), it results that
$\triangle \psi=-\varepsilon\,\overrightarrow{H},$
where $\overrightarrow{H}=H\,N$ is the mean curvature vector of $\psi$ (i.e. the trace of the second fundamental form with respect
to the first fundamental). In particular, the immersion $\psi$ is minimal (i.e. $H\equiv 0$ ) if and only if the coordinate functions $\psi_j$, $j=1,2,3$, are harmonic functions, or equivalently  $(\partial \psi_j/\partial z)$,  $j=1,2,3$, are $\K$-differentiable.

In the following, we state the Weierstarss representation type theorem for spacelike  (respectively, timelike) minimal immersions in $\L^{3}$, that was proved by Kobayashi in \cite{Kob} (respectively, by Konderak in \cite{konderak}), in a unified version.

\begin{theorem}[Weierstrass Representation]\label{teo1}
Let $\psi:\Omega\subset \K \rightarrow \L^3$ be a smooth conformal minimal  spacelike  (respectively, timelike) immersion. Then, the (para)complex tangent vector defined by
\begin{equation}
\phi(z) := \frac{\partial \psi}{\partial z}\bigg{|}_{\psi(z)}=\sum_{i=1}^3  \phi_i\,\frac{
\partial}{\partial x_i},\nonumber
\end{equation}
 satisfy the following conditions:
\begin{itemize}
\item[(i)]$ \phi_1\,\overline{\phi_1} + \phi_2\,\overline{\phi_2}- \phi_3\,\overline{\phi_3} \neq 0,$
\item[(ii)] $\phi_1^{2}+\phi_2^{2}-\phi_3^{2}= 0,$
\item[(iii)] $\displaystyle\frac{\partial\phi_j}{\partial\bar{z}} = 0, \; j=1,2,3$,
\end{itemize}
where $\dfrac{\partial}{\partial z}$ and $ \dfrac{\partial}{\partial\bar{z}}$ are the  (para)complex operators.

Conversely, if $\Omega\subset\K$ is a simply connected domain and $\phi_j:\Omega\to \K$, $j = 1,2,3$, are (para)complex functions satisfying the
conditions above, then the map
\begin{equation}
\psi= 2\,\Re\int_{z_0}^{z} \phi\, dz,
\end{equation}
is a well-defined conformal spacelike (respectively, timelike) minimal immersion in $\L^{3}$ (here, $z_0$ is an arbitrary fixed point of $\Omega$ and the integral
is along any curve joining $z_0$ to $z$)\footnote{The $\K$-differentiability ensures that the  $1$-forms $\phi_ j\, dz$, $j=1,2,3$, don't have real periods in $\Omega$.}.
\end{theorem}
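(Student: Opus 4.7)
The plan is to view the theorem as a dictionary between the three algebraic conditions on $\phi$ and the three geometric properties of $\psi$ (conformality, non-degeneracy of the induced metric, and minimality), with the dictionary read off from~\eqref{lambda} and~\eqref{beltrami}. I would extend the Lorentzian bilinear form $\K$-bilinearly to (para)complex tangent vectors and work throughout with $\psi_z$ and $\psi_{\bar z}$, using $\psi_z=\tfrac12(\psi_u-i\psi_v)$ in the complex case and $\psi_z=\tfrac12(\psi_u+\tau\psi_v)$ in the paracomplex case. In coordinates $\psi_z=\sum_j\phi_j\,\partial/\partial x_j$, whence $g(\psi_z,\psi_z)=\phi_1^{2}+\phi_2^{2}-\phi_3^{2}$ and $g(\psi_z,\psi_{\bar z})=\phi_1\overline{\phi_1}+\phi_2\overline{\phi_2}-\phi_3\overline{\phi_3}$, so that the three conditions in the theorem are intrinsic identities on $\psi_z$.

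For the direct direction I would start from the isothermality of $(u,v)$. A short $\K$-bilinear expansion gives $g(\psi_z,\psi_z)=\tfrac14(g(\psi_u,\psi_u)-\varepsilon\,g(\psi_v,\psi_v))$ plus an imaginary term proportional to $g(\psi_u,\psi_v)$, and both summands vanish by the isothermal relations; combined with the coordinate formula above this is exactly (ii). A parallel computation yields $2\,g(\psi_z,\psi_{\bar z})=\lambda$, matching~\eqref{lambda}; since $\psi$ is an immersion, $\lambda$ is nowhere zero, and in coordinates this reads (i). Finally, the identity $\triangle\psi=-\varepsilon\,\overrightarrow{H}$ shows that minimality is equivalent to harmonicity of each coordinate $\psi_j$, and~\eqref{beltrami} rewrites harmonicity as $\partial_{\bar z}\phi_j=\partial_{\bar z}\partial_z\psi_j=0$, which is (iii).

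For the converse, assume $\phi_j$ satisfies (i)--(iii) on the simply connected $\Omega$. Condition (iii) makes each $\phi_j$ $\K$-holomorphic, so (as recorded in the footnote) the 1-form $\phi_j\,dz$ has no real periods in $\Omega$ and $\psi_j:=2\,\Re\int_{z_0}^{z}\phi_j\,dz$ defines a real-valued function. Writing $\psi_j=F_j+\overline{F_j}$ for a $\K$-holomorphic primitive $F_j$ of $\phi_j$ and using $\partial\overline{F_j}/\partial z=0$ (obtained by conjugating $\partial F_j/\partial\bar z=0$), one gets $\partial\psi_j/\partial z=F_j'=\phi_j$. The dictionary now runs in reverse: (ii) gives $g(\psi_z,\psi_z)=0$, equivalent to the isothermal relations $g(\psi_u,\psi_u)=\varepsilon g(\psi_v,\psi_v)$ and $g(\psi_u,\psi_v)=0$; (i) gives $\lambda=2\,g(\psi_z,\psi_{\bar z})\neq0$, so $\psi$ is an immersion with induced metric of the correct causal character; and (iii), combined with~\eqref{beltrami}, yields $\triangle\psi_j=(4/\lambda)\,\partial_{\bar z}\phi_j=0$, so $\psi$ is minimal.

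The main obstacle is producing the primitive in the paracomplex case, since $\L$-holomorphicity is weaker than ordinary complex holomorphicity and $\L$ has zero divisors, so one cannot simply invoke the Cauchy primitive theorem. A clean way around this is to work at the level of real-valued 1-forms: the para-Cauchy-Riemann equations for $\phi_j$ force the real and imaginary components of $\phi_j\,dz$ to be closed, simple connectedness of $\Omega$ gives path-independence of their integrals, and Proposition~\ref{integral} then lets one reconstruct $\psi_j$ directly and verify $\partial\psi_j/\partial z=\phi_j$. Once the primitive is in place, the rest of the proof is the algebraic dictionary described above.
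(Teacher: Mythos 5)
Your argument is correct, but note that the paper itself does not prove Theorem~\ref{teo1}: it imports the statement from Kobayashi (spacelike case) and Konderak (timelike case) and records only, in the remark that follows, the dictionary you spell out --- condition (i) $\Leftrightarrow$ immersion via \eqref{lambda}, (ii) $\Leftrightarrow$ conformality, (iii) $\Leftrightarrow$ minimality via \eqref{beltrami}. Your proof is the standard one and fills in exactly that dictionary, so there is nothing to compare against in the text beyond the remark. Two small points are worth making explicit. First, in the spacelike case condition (i) alone only gives $\lambda\neq 0$; to conclude the metric is Riemannian you should observe that (ii) forces $|\phi_3|^2=|\phi_1^2+\phi_2^2|\le|\phi_1|^2+|\phi_2|^2$, so $\lambda\ge 0$ and hence $\lambda>0$ (in the paracomplex case $\lambda\,(du^2-dv^2)$ is Lorentzian for any nonzero $\lambda$, so no analogous check is needed). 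Second, your workaround for the paracomplex primitive --- passing to the real and imaginary components of $\phi_j\,dz$, whose closedness is exactly one of the para-Cauchy--Riemann equations, and then invoking simple connectedness --- is the right fix and is precisely what the paper's footnote about the absence of real periods is alluding to; verifying $\partial\psi_j/\partial z=\phi_j$ directly from $(\psi_j)_u=2a$, $(\psi_j)_v=2b$ closes the loop without ever needing a Cauchy-type primitive theorem over $\L$.
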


\begin{remark}
The first condition of Theorem~\ref{teo1} ensures that $\psi$ is an immersion (see \eqref{lambda}), the second one that $\psi$ is conformal and the third one that $\psi$ is minimal.
\end{remark}

\section{Enneper-type spacelike minimal immersions in $\L^{3}$}\label{four}
In this section and in the successive, we prove an Enneper-type representation formula for spacelike and timelike (respectively) minimal surfaces immersed in the three-dimensional Lorentz-Minkowski space. Our approach considers the complex numbers for the spacelike immersions, and the algebra of the Lorentz numbers (described in Section~\ref{algL}) for the timelike ones. 

We start by considering the conformal  spacelike minimal immersion given by:
$$\psi(z)=\Big(u+\frac{u^{3}}{3}-u\,v^{2}, -v-\frac{v^{3}}{3}+v\,u^{2},v^{2}-u^{2} \Big),\qquad z\in\Omega,$$ where $\Omega=\{z\in \C\;|\; |z|\neq 1\}$, called {\em Enneper immersion of 1st kind} (see \cite{K}). Writing
$$\psi(z)=\Big(\bar{z}+\frac{z^{3}}{3}, -\Re\,(z^{2})\Big),\quad z\in\Omega,$$
and putting
$$h(z)=-\Re\, (z^{2}),\qquad L(z)=\frac{z^{3}}{3},\qquad P(z)=z,\quad z\in \Omega,$$
we observe that  $L,P: \Omega\to\C$ are holomorphic functions and $h$ is a harmonic real valued function such that $(h_z)^{2}=L_z\,P_z$. Also, we have that $|L_z|-|P_z|=|z|^{2}-1\neq 0$, $z\in\Omega$.

In this context, we prove the theorem below and, also, Theorem~\ref{rend1}.
\begin{theorem}\label{teo-spacelike}
Let $h:\Omega\to\r$ be a harmonic function in the simply connected domain $\Omega\subset\C$ and $L,P:\Omega \to \C$ two holomorphic functions such that the following conditions are satisfied:
\begin{equation}\label{um1}
(h_z)^{2}=L_z\,P_z
\end{equation}
and
\begin{equation}\label{dois1}
|L_z|-|P_z|\neq 0.
\end{equation}
Then, the map $\psi: \Omega\to\C\times\r$, given by $\psi(z)=(L(z)+\overline{P(z)},h(z))$, defines a conformal spacelike minimal immersion into $\L^{3}$.
\end{theorem}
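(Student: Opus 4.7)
The plan is to verify the hypotheses of the Weierstrass representation Theorem~\ref{teo1} for the map $\psi(z)=(L(z)+\overline{P(z)},h(z))$, with $\K=\C$ and $\varepsilon=1$. Since the three conditions (i)--(iii) there characterize conformal spacelike minimal immersions, and $\psi$ is explicit with $\partial\psi/\partial z$ given directly by the calculation below, establishing those three conditions for the vector $\phi=\partial\psi/\partial z$ will suffice.

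First I would compute the components of $\phi$ in a convenient way. Writing $\psi_1+i\,\psi_2=L+\overline{P}$ and $\psi_1-i\,\psi_2=\overline{L}+P$, and using that $L,P$ are holomorphic (so $\overline{L},\overline{P}$ are antiholomorphic, killed by $\partial/\partial z$), one obtains the clean identities
\begin{equation*}
\phi_1+i\,\phi_2 = L_z,\qquad \phi_1-i\,\phi_2 = P_z,\qquad \phi_3=h_z.
\end{equation*}

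With these formulas in hand the remaining three verifications are short. Conformality (condition (ii)) follows from
\begin{equation*}
\phi_1^2+\phi_2^2-\phi_3^2=(\phi_1+i\,\phi_2)(\phi_1-i\,\phi_2)-\phi_3^2=L_z\,P_z-(h_z)^2,
\end{equation*}
which vanishes by hypothesis \eqref{um1}. Holomorphicity (condition (iii)) is immediate: $\phi_1\pm i\,\phi_2$ equal $L_z$ and $P_z$, both holomorphic because $L,P$ are; and $h$ harmonic means $h_z$ is holomorphic, so $\phi_3$ is holomorphic too. Non-degeneracy (condition (i)) uses the identity $2(|\phi_1|^2+|\phi_2|^2)=|\phi_1+i\,\phi_2|^2+|\phi_1-i\,\phi_2|^2=|L_z|^2+|P_z|^2$, and the observation that \eqref{um1} implies $|h_z|^2=|L_z|\,|P_z|$. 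Combining these,
\begin{equation*}
|\phi_1|^2+|\phi_2|^2-|\phi_3|^2=\tfrac{1}{2}\bigl(|L_z|^2+|P_z|^2\bigr)-|L_z|\,|P_z|=\tfrac{1}{2}\bigl(|L_z|-|P_z|\bigr)^2,
\end{equation*}
which is strictly positive by \eqref{dois1}.

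I do not expect a genuine obstacle here; the whole argument is really an exercise in reading off the Weierstrass data from the prescribed form of $\psi$. The only mildly delicate point is the bookkeeping between real and complex parts in the computation $\phi_1\pm i\,\phi_2=L_z,P_z$, and the observation that squaring \eqref{um1} in modulus gives $|h_z|^2=|L_z|\,|P_z|$ so that the ambiguity in taking a square root disappears by the time one reaches the clean expression $\tfrac{1}{2}(|L_z|-|P_z|)^2$ for the conformal factor. Once those two algebraic identities are in place, the induced metric is Riemannian (so $\psi$ is an immersion of spacelike type), $\psi$ is conformal, and each coordinate function is harmonic by the holomorphicity of $\phi_j$ combined with~\eqref{beltrami}, so $\psi$ is minimal.
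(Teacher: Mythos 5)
Your proposal is correct and follows essentially the same route as the paper: the paper defines $\phi_1=(L_z+P_z)/2$, $\phi_2=i(P_z-L_z)/2$, $\phi_3=h_z$ (equivalently your $\phi_1\pm i\,\phi_2=L_z,P_z$) and verifies conditions (i)--(iii) of Theorem~\ref{teo1} by the identical algebraic identities, including $2(\phi_1\overline{\phi_1}+\phi_2\overline{\phi_2}-\phi_3\overline{\phi_3})=(|L_z|-|P_z|)^2$. The only cosmetic difference is the direction of presentation (you differentiate the given $\psi$, the paper defines the $\phi_j$ first and checks that $2\,\Re\int\phi\,dz$ recovers $\psi$), which changes nothing of substance.
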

\begin{proof}
Let us consider the three complex valued functions on $\Omega$ given by:
$$\phi_1=\frac{L_z+P_z}{2},\qquad  \phi_2=\frac{i\,(P_z-L_z)}{2},\qquad \phi_3=h_z.$$
As $L_z=(\phi_1+i\,\phi_2)$ and $P_z=(\phi_1-i\,\phi_2),$
 from \eqref{um1} it results  that
$$\phi_1^{2}+\phi_2^{2}-\phi_3^{2}=(\phi_1+i\,\phi_2)\,(\phi_1-i\,\phi_2)-\phi_3^{2}=L_z\,P_z-(h_z)^{2}=0.$$
Also, from \eqref{um1} and \eqref{dois1}, we obtain that
\begin{equation}
\begin{aligned}
2\,(\phi_1\,\overline{\phi_1} + \phi_2\,\overline{\phi_2}- \phi_3\,\overline{\phi_3} )&=|L_z|^{2}+|P_z|^{2}-2 \,|h_z|^{2}\\&=(|L_z|-|P_z|)^{2}> 0.
\end{aligned}
\end{equation}
We now observe that, since $h$ is a harmonic function (i.e. $h_{uu}+h_{vv}=0$),  we have that $\phi_3$ is holomorphic (see Section~\ref{weier}). Moreover, the holomorphicity of $L$ and $P$ implies that the real and imaginary parts of $L$ and $P$ are harmonic functions and we can write $$\phi_1=\frac{\partial \Re(L+P)}{\partial z},\quad \phi_2=\frac{\partial \Im(L-P)}{\partial z}.$$ Therefore, $(\phi_1)_{\overline{z}}=0=(\phi_2)_{\overline{z}}$ and, from Theorem~\ref{teo1}, we  conclude that
$$
\begin{aligned}
\psi(z)&= 2\,\Big(\Re\int \phi_1(z)\, dz+i\,\Re\int \phi_2(z)\, dz,\Re\int \phi_3(z)\, dz\Big)\\&=(L(z)+\overline{P(z)},h(z))
\end{aligned}
$$
is a conformal spacelike minimal immersion into $\L^{3}$.
\end{proof}
In analogy to the Euclidean $3$-space (see \cite{a}), we shall call the immersion $\psi=(L+\overline{P},h)$ an {\it Enneper spacelike immersion} associated to $h$, its image an {\it Enneper graph} of $h$ and 
$\mathcal{D}_{\psi}^{\tiny{\C}}=(L_z,P_z,h_z)$ the {\it Enneper complex data} of $\psi$. 

We shall now illustrate the Theorem~\ref{teo-spacelike} with some known examples of spacelike minimal immersions in $\L^{3}$. Specifically, we will consider the natural analogues (spacelike) surfaces in $\L^{3}$ to the classical catenoid and helicoid.
\begin{example}[Spacelike catenoid of 1st kind]
Set $\Omega=\{z\in \C : |z|> 1\}$.
Let  $L,P:\Omega\to \C$ be the holomorphic functions defined by
$$L(z)=\frac{z}{2},\qquad P(z)=-\frac{ 1}{2\,z},$$
and $h(z)=\Re\,(\ln z)$, that is a harmonic function in $\Omega$. We observe that condition \eqref{um1} is satisfied and, also,
$$
|L_z|-|P_z|=\frac{|z|^2-1}{2\,|z|^2}\neq 0,\qquad z\in\Omega.$$
Then, from Theorem~\ref{teo-spacelike}, the corresponding spacelike minimal immersion is given by:
$$
\psi(z)=\Big(\frac{1}{2}\Big(z-\frac{1}{\overline{z}}\Big),\Re\,( \ln z)\Big)
$$
and it represents the {\em catenoid of 1st kind} (also called {\em elliptic catenoid}) described in \cite{K}.
Introducing polar coordinates $z=r\,e^{i\,\theta}$, we can write
$$\psi(r,\theta)=(\sinh (\ln r)\,\cos\theta,\sinh (\ln r)\,\sin\theta,\ln r),\qquad r>1,$$
so
$x_1^{2}+x_2^{2}=\sinh^{2} x_3$, with $ x_3> 0$.
\end{example}

\begin{example}[Spacelike helicoid of 1st kind]
Now, we describe the conjugate surface of the elliptic catenoid, which image in $\r^{3}$ is an open subset of the classical minimal helicoid, $x_1\,\cos x_3+x_2\,\sin x_3=0.$
For this, we consider  $\Omega=\{z\in \C : |z|> 1\}$, the holomorphic functions  
$$L(z)=\frac{i\,z}{2},\qquad P(z)=-\frac{i}{2\,z},\qquad z\in\Omega,$$
and the harmonic function  $h(z)=\Im\,( \ln z)$, defined in $\Omega$. As
$$h_z(z)=-\frac{i}{2\,z}, \qquad L_z(z)=\frac{i}{2},\qquad P_z(z)=\frac{i}{2\,z^{2}},$$
it results that $$L_z\,P_z=h_z^{2},\qquad
|L_z|-|P_z|=\frac{|z|^2-1}{2\,|z|^2}\neq 0,\qquad z\in\Omega.$$
Therefore, from Theorem \ref{teo-spacelike}, the corresponding spacelike minimal immersion is given by:
$$
\psi(z)=\Big(\frac{i}{2}\Big(\frac{1}{\overline{z}}+z\Big),\Im\,( \ln z)\Big)
$$
and it represents the {\em helicoid of 1st kind} given in \cite{K}. Using polar coordinates $z=r\,e^{i\,\theta}$, we get
$$\psi(r,\theta)=(-\cosh (\ln r)\,\sin\theta,\cosh (\ln r)\,\cos\theta,\theta),\quad r> 1.$$
\end{example}

In the next theorem, we will show that any spacelike minimal surface in the Lorentz-Minkowski $3$-space can be rendered as the Enneper graph of a harmonic function.
\begin{theorem}\label{rend1}
Let $\tilde\psi:{\mathcal M}^2\to \L^{3}\equiv\C\times\r$ be a
minimal immersion of a spacelike surface ${\mathcal M}$ in $\L^{3}$.
Then, there exists a simply connected domain $\Omega\subset\C$ and a harmonic function $h:\Omega\subset\C\to \r$
such that the immersed minimal surface $\tilde\psi({\mathcal M})$ is
an Enneper graph of $h$.
\end{theorem}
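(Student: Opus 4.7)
The plan is to invert the construction from Theorem~\ref{teo-spacelike}. Since spacelike surfaces in $\L^{3}$ admit local isothermal coordinates (as recalled in Section~\ref{weier}), I would first restrict $\tilde\psi$ to a simply connected coordinate neighborhood on which, after reparametrization, it becomes a conformal spacelike minimal immersion $\psi:\Omega\subset\C\to\L^{3}$, with components $\psi=(\psi_1,\psi_2,\psi_3)$. This $\Omega$ will be the domain appearing in the statement.

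By Theorem~\ref{teo1}, the complex tangents $\phi_j:=(\psi_j)_z$ are holomorphic on $\Omega$. Guided by the explicit formulas used in the proof of Theorem~\ref{teo-spacelike}, I would introduce holomorphic antiderivatives $L,P:\Omega\to\C$ of $\phi_1+i\phi_2$ and $\phi_1-i\phi_2$ respectively---these exist and are single-valued because $\Omega$ is simply connected---and set $h:=\psi_3$, which is harmonic since $\psi$ is conformal and minimal.

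What remains is to verify the three features that make $(L,P,h)$ Enneper data: the product condition \eqref{um1}, the immersion condition \eqref{dois1}, and the graph identity $\psi_1+i\psi_2=L+\overline{P}$. Condition~\eqref{um1} is simply the conformality relation (ii) of Theorem~\ref{teo1} rewritten in terms of $L_z,P_z,h_z$; condition~\eqref{dois1} then follows from the non-vanishing relation (i) once one uses $|h_z|^{2}=|L_z||P_z|$, which turns the expression into the perfect square $(|L_z|-|P_z|)^{2}$; and the graph identity is obtained by matching $z$- and $\bar z$-derivatives of $L+\overline{P}$ against those of $\psi_1+i\psi_2$, so that the two functions differ by a constant that can be absorbed into $L$.

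I do not foresee a real obstacle: once one recognizes the correct combinations $\phi_1\pm i\phi_2$---which are precisely the null coordinates in the $(x_1,x_2)$-plane---the construction is forced and the verification is essentially a reread of the proof of Theorem~\ref{teo-spacelike} in the opposite direction. The only steps requiring a little care are the initial passage to a simply connected conformal chart (routine, since the induced metric is Riemannian) and the bookkeeping of integration constants so that the Enneper-graph formula holds on the nose rather than up to translation.
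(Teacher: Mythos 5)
Your computational core---defining $L$ and $P$ as primitives of $(\psi_1)_z+i\,(\psi_2)_z$ and $(\psi_1)_z-i\,(\psi_2)_z$, setting $h=\psi_3$, reading condition \eqref{um1} off the conformality relation, obtaining \eqref{dois1} as the perfect square $(|L_z|-|P_z|)^2=2\,g(\psi_z,\psi_{\bar z})>0$, and recovering $\psi_1+i\,\psi_2=L+\overline{P}$ up to a constant of integration---is exactly what the paper does. That part of your argument is sound.

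The gap is in your very first step. You restrict $\tilde\psi$ to ``a simply connected coordinate neighborhood,'' which only yields the \emph{local} statement: a piece of the surface near each point is an Enneper graph. The theorem as stated asserts that the entire image $\tilde\psi({\mathcal M})$ is the Enneper graph of a single harmonic function $h$ on a single simply connected planar domain $\Omega$, and an arbitrary spacelike minimal surface ${\mathcal M}$ need not itself be simply connected or covered by one isothermal chart. The paper closes this gap globally: it first observes that ${\mathcal M}$ cannot be compact (the coordinate functions of $\tilde\psi$ are harmonic, hence would be constant on a compact Riemannian surface), so by Koebe's Uniformization Theorem the universal cover $\Omega$ of ${\mathcal M}$ is conformally $\C$ or the unit disc; it then lifts $\tilde\psi$ to $\psi=\tilde\psi\circ\pi$ on $\Omega$ and runs your computation there, so that $\psi(\Omega)=\tilde\psi({\mathcal M})$ is the whole surface. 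Without the non-compactness remark and the uniformization step, your argument does not deliver the stated conclusion, only its local version.
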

\begin{proof}
Suppose that the minimal immersion is given by
$\tilde\psi=(\tilde\psi_1+i\,\tilde\psi_2,\tilde\psi_3)$. Since ${\mathcal M}$ is a spacelike minimal surface it cannot be compact (on the contrary, $\tilde\psi$  would be a harmonic function on a compact Riemannian surface, hence constant) so, from the Koebe's Uniformization Theorem, it results that its covering space $\Omega$ is either the complex plane $\C$ or the open unit complex disc.

We denote by $\pi: \Omega\to {\mathcal M}$ the universal covering of ${\mathcal M}$ and by $\psi:\Omega\to \L^{3}$  the lift of $\tilde\psi$, i.e. $\psi=\tilde\psi\circ \pi$. As $\psi$ is a
conformal minimal immersion, it follows that
\begin{equation}\label{ro}
\begin{aligned}
0&=(\psi_1)_z^2+(\psi_2)_z^2-(\psi_3)_z^2\\&=[(\psi_1)_z+i\,(\psi_2)_z]\,[(\psi_1)_z-i\,(\psi_2)_z]-(\psi_3)_z^2
\end{aligned}
\end{equation}
and, also, $(\psi_i)_z$, $i=1,2,3$, are holomorphic.
Fixed a point $z_0\in\Omega$, the equation
{\eqref{ro}} suggests to define the following functions:
\begin{equation}\begin{aligned}L(z)&=\int_{z_0}^z
[(\psi_1)_z+i\,(\psi_2)_z]\,dz,\\
P(z)&=\int_{z_0}^z [(\psi_1)_z-i\,(\psi_2)_z]\,dz.\end{aligned}\end{equation}
Since $\Omega$ is a
simply connected domain and the integrand functions are holomorphic, the above integrals don't depend on the path from
$z_0$ to $z$, so $L$ and $P$ are well-defined  holomorphic functions.
We shall prove that
$\psi(z)=(L(z)+\overline{P(z)},h(z))$, where $h(z):=\psi_3(z)$ is a harmonic function (because $(\psi_3)_{z\overline{z}}=0$). For this,  we note that
\begin{equation}\begin{aligned}\nonumber L(z)+\overline{P(z)}&=
\int_{z_0}^z [(\psi_1)_z+i\,(\psi_2)_z]\,dz+\int_{z_0}^z
[(\psi_1)_{\overline{z}}+i\,(\psi_2)_{\overline{z}}]\,d\overline{z}\\&=
\int_{z_0}^z d\psi_1+i\,\int_{z_0}^z d\psi_2=\psi_1(z)+i\,\psi_2(z),
\end{aligned}\end{equation} where, in the last equality, we
have assumed (without loss of generality) that $\psi(z_0)=(0,0,0)$.
Besides, we observe that equation \eqref{ro} can be written as
\begin{equation}\label{tre}
L_z\,P_z-(h_z)^{2}=0,
\end{equation}
that is the condition~\eqref{um1} of Theorem \ref{teo-spacelike}.
 Finally, to prove that
$\psi$ is an Enneper immersion associated to the harmonic function
$h$, it remains to verify the equation {\eqref{dois1}}.
As
$$(\psi_1)_z=\frac{L_z+P_z}{2},\qquad (\psi_2)_z=\frac{i\,(P_z-L_z)}{2},$$ taking into account \eqref{tre}, we have that
$$0<  2\,g( \psi_z,\psi_{\overline{z}})=|L_z|^{2}+|P_z|^{2}-2 \,|h_z|^{2}=(|L_z|-|P_z|)^{2}.$$
 because of  $\psi$ is an immersion. This completes the
 proof.
\end{proof}

Using the Theorem~\ref{rend1}, we have determined the Enneper data of the spacelike catenoids and helicoids described in \cite{ACM,Kob} and we have collected them in the Tables~\ref{tab1} and \ref{tab2}, respectively. Before, we observe that in \cite{Kob} the elliptic catenoid (respectively, hyperbolic catenoid, parabolic catenoid) is called 
catenoid of first kind (respectively, catenoid of second kind, Enneper surface of second kind\footnote{In the Table~\ref{tab1} we have considered the parabolic catenoid parametrized by (see \cite{Kob}):
$$\psi(u,v)=\Big(u-uv^{2}+\frac{u^{3}}{3},-2uv,-u-uv^{2}+\frac{u^{3}}{3}\Big),\qquad u\neq 0.$$}).
In Section~\ref{new} we will use the Tables~\ref{tab1} and \ref{tab2} to  construct new interesting examples of minimal surfaces in $\L^3$.
\begin{table}[h!]
\caption{Enneper data for spacelike catenoids in $\L^{3}$.}\label{tab1}
\begin{tabular}{|c|c|c|c|c|}
 \hline
& &&&\\  $\mathbf{L_z}$ & $\mathbf{P_z}$ & $\mathbf{h_z}$ & \textbf{spacelike surface}&  \textbf{catenoid}\\ &&&&\\
\hline
 & &&&\\ $\dfrac{1}{2}$ & $\dfrac{1}{2z^{2}}$ & $-\dfrac{1}{2z}$ & $x_1^2+x_2^2=(\sinh x_3)^2$ & \text{elliptic}\\ & &&&
  \\
  \hline
  &&& &\\ $\dfrac{1+\cos z}{2}$ & $\dfrac{1-\cos z}{2}$ & $-\dfrac{\sin z}{2}$ & $x_3^2-x_2^2=(\cos x_1)^2$ & \text{hyperbolic }\\ & &&&
  \\
  \hline
   &&& &\\ $\dfrac{\cosh z -1}{2}$ & $\dfrac{\cosh z+1}{2}$ & $\dfrac{\sinh z}{2}$ & $x_3^2-x_1^2=(\cos x_2)^2$ & \text{hyperbolic }\\ & &&&
  \\
  \hline
  &&& &\\ $\dfrac{(1-z)^2}{2}$ & $\dfrac{(1+z)^{2}}{2}$ & $\dfrac{z^{2}-1}{2}$ & $12(x_1^2+x_2^2-x_3^2)=(x_1-x_3)^4$ & \text{parabolic }\\ & &&&
  \\
  \hline
\end{tabular}\end{table}
~\begin{table}[h!]\caption{Enneper data for spacelike helicoids in $\L^{3}$.}\label{tab2}
\begin{tabular}{|c|c|c|c|c|}
 \hline
& &&&\\  $\mathbf{L_z}$ & $\mathbf{P_z}$ & $\mathbf{h_z}$ & \textbf{spacelike surface}&  \textbf{helicoid} \\ &&&&\\
\hline
  &&& &\\ $\dfrac{i}{2}$ & $\dfrac{i}{2z^{2}}$ & $-\dfrac{i}{2z}$ & $x_1=-x_2\tan x_3$ & \text{ of 1st kind}\\ & &&&
  \\
  \hline
   &&& &\\ $\dfrac{\cos z+1}{2}$ & $\dfrac{\cos z-1}{2}$ & $-\dfrac{i\,\sin z}{2}$ & $x_3=x_1\tanh x_2$ & \text{ of 2nd kind} \\ & &&&
  \\
  \hline
  &&& &\\ $\dfrac{i\,(1-z)^2}{2}$ & $\dfrac{i\,(1+z)^{2}}{2}$ & $\dfrac{i\,(z^{2}-1)}{2}$ &  $x_2=\dfrac{(x_1-x_3)^2}{6}+\dfrac{x_3+x_1}{x_3-x_1}$ & \text{parabolic}\\ & &&&
  \\
  \hline
\end{tabular}\end{table}

\section{Enneper-type timelike minimal immersions in $\L^{3}$}\label{five}
Now let us estabilish the analogue result to Theorem~\ref{teo-spacelike} for timelike minimal immersions in $\L^3$. We start considering the Lorentzian Enneper immersion given by Konderak in \cite{konderak}:
$$\psi(z)=\Big(u^{2}+v^{2}, u-\frac{u^{3}}{3}-u\,v^{2}, v+\frac{v^{3}}{3}+v\,u^{2}\Big),\qquad z\in\Omega,$$
where $\Omega=\{z\in \L\;|\; 1+z\,\overline{z}\neq 0\}$, that can be written as
$$\psi(z)=\Big(\Re\, (z^{2}), z-\frac{\overline{z}^{3}}{3}\Big),\quad z\in\Omega.$$
Observe that, putting
$$h(z)=\Re\, (z^{2}),\qquad L(z)=z,\qquad P(z)=\frac{z^{3}}{3},\quad z\in \Omega,$$
we have that  $L,P: \Omega\to\L$ are $\L$-differentiable and $h$ is a harmonic real valued function  (i.e. $h_{uu}-h_{vv}=0$) such that $(h_z)^{2}=L_z\,P_z$. Also, 
$$2h_z\,\overline{h_z}+L_z\,\overline{L_z}+P_z\,\overline{P_z}=(1+z\,\overline{z})^{2}> 0,\quad z\in\Omega.$$
In this regard we prove the following theorem.
\begin{theorem}\label{teo-timelike}
Let $h:\Omega\to\r$ be a harmonic function in the simply connected domain $\Omega\subset\L$ and $L,P:\Omega \to \L$ two $\L$-differentiable functions such that the following conditions are satisfied:
\begin{equation}\label{um}
(h_z)^{2}=L_z\,P_z
\end{equation}
and
\begin{equation}\label{dois}
2\,h_z\,\overline{h_z}+L_z\,\overline{L_z}+P_z\,\overline{P_z}\neq 0.
\end{equation}
Then, the map $\psi: \Omega\to\r\times\L$, given by $\psi(z)=(h(z),L(z)-\overline{P(z)})$, defines a conformal timelike minimal immersion into $\L^{3}$.
\end{theorem}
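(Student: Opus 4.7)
The plan is to mirror the proof of Theorem~\ref{teo-spacelike}, replacing complex with paracomplex analysis and swapping the roles of the spacelike and timelike ambient coordinates. Since the target splits as $\r \times \L$ with $x_1 = h$ on the spacelike factor and $(x_2, x_3) = (\Re(L-\overline{P}), \Im(L-\overline{P}))$ on the paracomplex factor, I would introduce the three $\L$-valued functions
\begin{equation*}
\phi_1 := h_z, \qquad \phi_2 := \frac{L_z - P_z}{2}, \qquad \phi_3 := \frac{\tau\,(L_z + P_z)}{2},
\end{equation*}
and then verify hypotheses (i)--(iii) of Theorem~\ref{teo1} so as to apply the Weierstrass integration and recognize the output as $(h, L-\overline{P})$.

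For (ii), since $\tau^2 = 1$, the cross terms in $\phi_2^2 - \phi_3^2$ cancel and the expression collapses to $-L_z P_z$, so that $\phi_1^2 + \phi_2^2 - \phi_3^2 = (h_z)^2 - L_z P_z = 0$ by~\eqref{um}. For (i), one uses $\overline{\tau} = -\tau$, which flips a sign inside $\phi_3\overline{\phi_3}$ so that the off-diagonal terms in $\phi_2\overline{\phi_2}$ and $\phi_3\overline{\phi_3}$ combine additively rather than cancelling; a short manipulation gives $2(\phi_1\overline{\phi_1} + \phi_2\overline{\phi_2} - \phi_3\overline{\phi_3}) = 2 h_z \overline{h_z} + L_z \overline{L_z} + P_z \overline{P_z}$, which is nonzero by~\eqref{dois}. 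For (iii), $(\phi_1)_{\bar z} = h_{z\bar z}$ vanishes because $h$ is harmonic (via~\eqref{beltrami}), and $(\phi_j)_{\bar z} = 0$ for $j=2,3$ because $\partial_z$ and $\partial_{\bar z}$ commute and $L, P$ are $\L$-holomorphic, so their $\L$-derivatives remain $\L$-holomorphic.

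Theorem~\ref{teo1} then delivers a conformal timelike minimal immersion $\psi = 2\Re \int_{z_0}^z \phi\, dz$, and it remains to identify it with $(h, L-\overline{P})$ up to a translation. The first component reduces to $h(z) - h(z_0)$ by Proposition~\ref{integral} applied to the real-valued $h$. For the remaining two, I would integrate termwise using $\int L_z\,dz = L + \mathrm{const}$ and $\int P_z\,dz = P + \mathrm{const}$ (valid for $\L$-holomorphic $L, P$) and then apply the identities $\Re(\tau f) = \Im f$ and $\Im(L - \overline{P}) = \Im L + \Im P$ to see that the paracomplex part of $\psi$ reassembles into $L - \overline{P}$. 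Normalizing $\psi(z_0) = 0$ completes the identification.

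The main obstacle I anticipate is purely bookkeeping: keeping the $\tau$-factors, conjugations, and sign flips consistent in the paracomplex algebra. In particular, the sign change $\overline{\tau} = -\tau$ is precisely what causes condition~(i) to become a \emph{sum} of $|L_z|^2$-type terms instead of the square of a difference as in the complex case, and this is the structural feature that distinguishes the non-degeneracy hypothesis~\eqref{dois} from its spacelike counterpart~\eqref{dois1}.
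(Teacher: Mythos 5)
Your proposal is correct and follows essentially the same route as the paper's own proof: the same choice of $\phi_1=h_z$, $\phi_2=(L_z-P_z)/2$, $\phi_3=\tau(L_z+P_z)/2$, the same verification of conditions (i)--(iii) of Theorem~\ref{teo1} (with the sign flip from $\overline{\tau}=-\tau$ producing the sum $2h_z\overline{h_z}+L_z\overline{L_z}+P_z\overline{P_z}$), and the same identification of $2\,\Re\int\phi\,dz$ with $(h,L-\overline{P})$ via Proposition~\ref{integral}. The only cosmetic difference is that you justify $(\phi_j)_{\bar z}=0$ for $j=2,3$ by commuting $\partial_z$ and $\partial_{\bar z}$, whereas the paper writes $\phi_2,\phi_3$ as $z$-derivatives of the harmonic functions $\Re(L-P)$ and $\Im(L+P)$; both are valid.
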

\begin{remark}
If  $h_z(z)\notin K\cup\{0\}$, for all $z\in\Omega$, the condition~\eqref{dois} is equivalent to $$P_z\notin K\cup\{0\}\qquad \text{and} \qquad h_z\,\overline{h_z}+L_z\,\overline{L_z}\neq 0.$$ In fact, using \eqref{um}, we can write
$$\begin{aligned}&2\,h_z\,\overline{h_z}+L_z\,\overline{L_z}+P_z\,\overline{P_z}=P_z\,\overline{P_z}\,
\Big(1+\frac{L_z\,\overline{L_z}}{h_z\,\overline{h_z}}\Big)^{2}.
\end{aligned}$$
\end{remark}
\begin{proof}
Let define three paracomplex valued functions on $\Omega$:
$$\phi_1=h_z,\qquad \phi_2=\frac{L_z-P_z}{2},\qquad  \phi_3=\frac{\tau\,(L_z+P_z)}{2}.$$
As $L_z=\phi_2+\tau\,\phi_3$ and $ P_z=-\phi_2+\tau\,\phi_3$,
from \eqref{um} and \eqref{dois}, it
results  that
$$\phi_1^{2}+\phi_2^{2}-\phi_3^{2}=\phi_1^{2}+(\phi_2+\tau\,\phi_3)\,(\phi_2-\tau\,\phi_3)=h_z^{2}-L_z\,P_z=0$$
and
$$
\begin{aligned}
2(\phi_1\,\overline{\phi_1} + \phi_2\,\overline{\phi_2}- \phi_3\,\overline{\phi_3} )=2h_z\,\overline{h_z}+L_z\,\overline{L_z}+P_z\,\overline{P_z}\neq 0.
\end{aligned}
$$
We observe that, since $h$ is a harmonic function (i.e. $h_{uu}-h_{vv}=0$),  the function $\phi_1$ is $\L$-differentiable. Moreover, the $\L$-differentiabilty of $L$ and $P$ implies that 
the real and imaginary parts of $L$ and $P$ are harmonic functions and, using \eqref{deri}, we can write $$\phi_2=\frac{\partial \Re(L-P)}{\partial z},\quad \phi_3=\frac{\partial \Im(L+P)}{\partial z}.$$
Consequently, $(\phi_2)_{\overline{z}}=0=(\phi_3)_{\overline{z}}$ and, from Theorem~\ref{teo1} and taking into account the Proposition~\ref{integral}, we conclude that
$$
\begin{aligned}
\psi(z)&= 2\,\Big(\Re\int \phi_1(z)\,dz,\Re\int \phi_2(z)\, dz+\tau\,\Re\int \phi_3(z)\, dz\Big)\\&=(h(z),L(z)-\overline{P(z)})
\end{aligned}
$$
is a conformal timelike minimal immersion into $\L^{3}$.
\end{proof}

We will call  $\psi=(h,L-\overline{P})$  an {\it Enneper timelike immersion} associated to $h$ and
$\mathcal{D}_{\psi}^{\tiny{\L}}=(L_z,P_z,h_z)$ the {\em Enneper paracomplex data} of  $\psi$.

We are going to illustrate the Theorem~\ref{teo-timelike} throught  some known examples of timelike minimal immersions into $\L^{3}$. We will use the formulas given in Section~\ref{formulas} (see \cite{konderak2}, for more details).

\begin{example}[Lorentzian catenoid]
Let $L,P:\L\to \L$ be the $\L$-differentiable functions defined by:
$$L(z)=\frac{\mathsf{cosh}\, z-\mathsf{sinh}\, z}{2},\qquad P(z)=-\frac{\mathsf{cosh}\, z+\mathsf{sinh}\, z}{2},$$
and $h(z)=u$, that is a harmonic function in $\L$. It's easy to check that $L_z\,P_z=(h_z)^{2}$ and, also,
$$
2h_z\,\overline{h_z}+L_z\,\overline{L_z}+P_z\,\overline{P_z}
=\cosh^2 u>0,\qquad z\in \L.
$$
Then, from Theorem~\ref{teo-timelike}, the corresponding timelike minimal immersion is given by
$$
\begin{aligned}
\psi(z)&=(u,\Re\, (\mathsf{cosh}\, z)-\tau\,\Im\, (\mathsf{sinh}\, z))\\
&=(u,\cosh u\,\cosh v,-\cosh u\,\sinh v),
\end{aligned}$$
and it represents the {\em Lorentzian catenoid} (see \cite{konderak}).
\end{example}

\begin{example}[Lorentzian helicoid]
In this example, we give the Enneper functions for the timelike helicoid described in \cite{konderak}. We consider in $\Omega=\{z\in\L\; |\; u\neq 0\}$ the $\L$-differentiable functions given by:
$$L(z)=\frac{\mathsf{sinh}\, z-\mathsf{cosh}\, z}{2},\qquad P(z)=\frac{\mathsf{cosh}\, z+\mathsf{sinh}\, z}{2}$$ and the harmonic function $h(z)=-v$.
As $L_z=-L$, $P_z=P$ and $h_z=-\tau/2$, the
condition~\eqref{um} is satisfied. Also,
$$
2h_z\,\overline{h_z}+L_z\,\overline{L_z}+P_z\,\overline{P_z}=\sinh^2 u> 0, \qquad z\in\Omega.
$$
Then, from Theorem~\ref{teo-timelike}, we obtain that the map
$$
\begin{aligned}
\psi(z)&=(-v,-\Re\, (\mathsf{cosh}\, z)+\tau\,\Im\, (\mathsf{sinh}\, z))\\
&=(-v,-\cosh u\,\cosh v,\cosh u\,\sinh v)
\end{aligned}$$
defines a conformal timelike minimal immersion (in a simply connected subset of $\Omega$) and it is the parametrization of the {\em Lorentzian helicoid} given in \cite{konderak}.
\end{example}

Now, we will show that any simply connected timelike minimal surfaces in the Lorentz-Minkowski $3$-space can be represented as the Enneper graph of a harmonic function. More precisely, we have the following:

\begin{theorem}\label{rend2}
Let ${\mathcal M}^2$ a timelike minimal surface in $\L^{3}$, given by the immersion $\psi:\Omega\to \L^{3}$, where $\Omega\subset\L$
is a simply connected domain. Then, there exists a harmonic function $h:\Omega\subset\L\to \r$ such that the immersed minimal surface ${\mathcal M}$ is
an Enneper graph of $h$.
\end{theorem}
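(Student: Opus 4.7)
The plan is to mimic the proof of Theorem~\ref{rend1}, using the paracomplex factorization $a^2 - b^2 = (a+\tau b)(a-\tau b)$ as the analogue of the complex factorization $(a+ib)(a-ib) = a^2+b^2$ employed there. Writing the immersion as $\psi = (\psi_1,\psi_2 + \tau\psi_3)$ with $\psi_j:\Omega\to\r$, and setting $h := \psi_1$, the function $h$ is harmonic because $\psi$ is conformal and minimal, so that each $(\psi_j)_z$ is $\L$-differentiable. The conformality identity $(\psi_1)_z^2 + (\psi_2)_z^2 - (\psi_3)_z^2 = 0$ then factors (using $\tau^2=1$) as
\[
h_z^2 + [(\psi_2)_z + \tau(\psi_3)_z]\,[(\psi_2)_z - \tau(\psi_3)_z] = 0,
\]
which suggests the definitions
\[
L(z) := \int_{z_0}^{z} [(\psi_2)_z + \tau(\psi_3)_z]\,dz,\qquad P(z) := \int_{z_0}^{z} [-(\psi_2)_z + \tau(\psi_3)_z]\,dz.
\]

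First I would justify that $L$ and $P$ are well-defined $\L$-holomorphic functions on the simply connected $\Omega$. Each integrand is $\L$-differentiable (being an $\L$-linear combination of the $\L$-differentiable objects $(\psi_j)_z$), and the associated real $1$-forms are closed by the para-Cauchy--Riemann equations; simple connectedness of $\Omega$ then yields path-independence, exactly as in the footnote to Theorem~\ref{teo1}. A direct multiplication gives $L_zP_z = -[(\psi_2)_z^2 - (\psi_3)_z^2] = h_z^2$, so condition \eqref{um} holds.

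Next I would verify the graph identity $\psi_2 + \tau\psi_3 = L - \overline{P}$. After a harmless translation so that $\psi(z_0) = 0$, I expand $\overline{P(z)} = \int_{z_0}^z \overline{P_z}\,d\bar z$, using $\overline{\tau} = -\tau$ together with $\overline{(\psi_j)_z} = (\psi_j)_{\bar z}$ (valid because each $\psi_j$ is real). The combination $L(z) - \overline{P(z)}$ then collapses into $\int d\psi_2 + \tau\int d\psi_3 = \psi_2(z) + \tau\psi_3(z)$, as required. For condition \eqref{dois}, a short expansion of $L_z\overline{L_z} + P_z\overline{P_z}$ shows that all $\tau$-cross terms cancel, leaving $2[(\psi_2)_z\overline{(\psi_2)_z} - (\psi_3)_z\overline{(\psi_3)_z}]$; adding $2h_z\overline{h_z}$ reproduces $2g(\psi_z,\psi_{\bar z}) = \lambda$ by \eqref{lambda}, which is nonzero since $\psi$ is an immersion.

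The main obstacle I expect is the paracomplex bookkeeping in verifying $L - \overline{P} = \psi_2 + \tau\psi_3$, since one must consistently track how conjugation interacts with $\tau$ and with $d\overline{z}$. Once this identity is in hand, everything else is a routine transcription of the argument for Theorem~\ref{rend1} into the $\L$-setting, with Theorem~\ref{teo-timelike} applied to $(h, L, P)$ to close the loop.
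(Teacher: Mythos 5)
Your proposal is correct and follows essentially the same route as the paper: the same factorization of the conformality identity, the same definitions of $L$ and $P$ (your integrand $-(\psi_2)_z+\tau(\psi_3)_z$ is exactly the paper's $P=-\int[(\psi_2)_z-\tau(\psi_3)_z]\,dz$), the same verification of $L-\overline{P}=\psi_2+\tau\psi_3$ and of conditions \eqref{um} and \eqref{dois} via $2g(\psi_z,\psi_{\bar z})\neq 0$. The only cosmetic difference is that the paper establishes the $\L$-holomorphicity of the $(\psi_i)_z$ by passing through proper null coordinates ($\psi_{xy}=0$) before introducing the isothermal coordinate $z$, whereas you invoke the equivalence between minimality and harmonicity of the coordinate functions stated in Section~\ref{weier}; both are legitimate.
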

\begin{proof}
In terms of proper null coordinates $x,y$ on $\Omega$ (see \cite{tila}), $ds^{2}=2Fdx\,dy$, with $F>0$, and the minimality of $\psi$ gives $m=g(\psi_{xy},N)=0.$ Therefore, as $g(\psi_{xy},\psi_x)=0=g(\psi_{xy},\psi_y)$ by $E=G=0$,
it results that $\psi_{xy}=0$. Thus, introducing in $\Omega$ the paracomplex isothermal coordinate $z=u+\tau\, v$, where $u=x+y$, $v=x-y$, we have that $\psi_{uu}-\psi_{vv}=0$ and, so,
\begin{equation}\label{compli1}
\frac{\partial (\psi_i)_z}{\partial \overline{z}}=0,\qquad i=1,2,3.
\end{equation}
The conformality of $\psi$ implies the equation
\begin{equation}
\begin{aligned}\label{ti}
0&=(\psi_1)_z^2+(\psi_2)_z^2-(\psi_3)_z^2\\&=(\psi_1)_z^2+[(\psi_2)_z+\tau\,(\psi_3)_z]\,[(\psi_2)_z-\tau\,(\psi_3)_z],
\end{aligned}
\end{equation}
that suggests to define the following functions:
\begin{equation}\begin{aligned}\label{compli2} L(z)&=\int_{z_0}^z
[(\psi_2)_z+\tau\,(\psi_3)_z]\,dz,\\
P(z)&=-\int_{z_0}^z [(\psi_2)_z-\tau\,(\psi_3)_z]\,dz,\end{aligned}\end{equation}
where $z_0\in\Omega$ is a fixed point.
Since $\Omega$ is a
simply connected domain in $\L$, the equation~\eqref{compli1} ensures that the integrals in \eqref{compli2} don't depend on the path from
$z_0$ to $z$. So $L$ and $P$ are well-defined  $\L$-holomorphic functions.
We shall prove that
$\psi(z)=(h(z),L(z)-\overline{P(z)})$, where $h(z):=\psi_1(z)$ is a harmonic function (see \eqref{compli1}). For this, we have
\begin{equation}\begin{aligned}\nonumber L(z)-\overline{P(z)}&=
\int_{z_0}^z [(\psi_2)_z+\tau\,(\psi_3)_z]\,dz+\int_{z_0}^z
[(\psi_2)_{\overline{z}}+\tau\,(\psi_3)_{\overline{z}}]\,d\overline{z}\\&=
\int_{z_0}^z d\psi_2+\tau\,\int_{z_0}^z d\psi_3=\psi_2(z)+\tau\,\psi_3(z),
\end{aligned}\end{equation} where, in the last equality, we
have assumed (without loss of generality) that $\psi(z_0)=(0,0,0)$.
Finally, we observe that \eqref{ti} can be written as
\begin{equation}\label{tre1}
L_z\,P_z-(h_z)^{2}=0,
\end{equation}
that is the condition \eqref{um} of Theorem~\ref{teo-timelike}. Therefore, using that
$$(\psi_2)_z=\frac{L_z-P_z}{2},\qquad  (\psi_3)_z=\frac{\tau\,(L_z+P_z)}{2},$$  we get 
$$0 \neq 2\,g( \psi_z,\psi_{\overline{z}})=2h_z\,\overline{h_z}+L_z\,\overline{L_z}+P_z\,\overline{P_z},$$
 because of  $\psi$ is an immersion.  This finishes the proof.
\end{proof}
Now, we will use Theorem~\ref{rend2} to provide a description of the timelike catenoids and helicoids given in \cite{CDM,konderak} in terms of their paracomplex Enneper data (see Tables~\ref{tab3} and \ref{tab4}). In the last section, we will employ these tables to determine new interesting examples of minimal surfaces in $\L^3$.
\begin{table}[h!]\caption{Enneper data for Lorentzian catenoids in $\L^{3}$.}\label{tab3}
\begin{tabular}{|c|c|c|c|c|}
 \hline
& &&&\\  $\mathbf{L_z}$ & $\mathbf{P_z}$ & $\mathbf{h_z}$ & \textbf{timelike surface}& \textbf{catenoid} \\ &&&&\\
\hline
  &&& &\\ $\dfrac{\tau\,(1+\mathsf{cos}\, z)}{2}$ & $\dfrac{\tau\,(1-\mathsf{cos}\, z)}{2}$ & $-\dfrac{\mathsf{sin} z}{2}$ & $x_1^2+x_2^2=(\cos x_3)^2$ & \text{elliptic catenoid}\\ & &&&
  \\
  \hline
  &&& &\\ $\dfrac{\mathsf{sinh}\, z-\mathsf{cosh}\, z}{2}$ & $-\dfrac{\mathsf{sinh}\, z+\mathsf{cosh} z}{2}$ & $\dfrac{1}{2}$ & $x_2^2-x_3^2=(\cosh x_1)^2$ & \text{hyp.  of 1st kind}\\ & &&&
  \\
  \hline
   &&& &\\ $\dfrac{\tau\, \mathsf{cosh}\, z +1}{2}$ & $\dfrac{\tau\, \mathsf{cosh}\, z-1}{2}$ & $\dfrac{\tau\, \mathsf{sinh\,} z}{2}$ & $x_3^2-x_1^2=(\sinh x_2)^2$ & \text{hyp. of 2nd kind}\\ & &&&
  \\
  \hline
   &&& &\\ $-\dfrac{\tau(z+1)^2}{2}$ & $-\dfrac{\tau(z-1)^2}{2}$ & $\dfrac{1- z^2}{2}$ & $12(x_3^2-x_1^2-x_2^2)=(x_1-x_3)^4$ & \text{parabolic}\\ & &&&
  \\
  \hline
\end{tabular}\end{table}
\begin{table}[h!]\caption{Enneper data for Lorentzian helicoids in $\L^{3}$.}\label{tab4}
\begin{tabular}{|c|c|c|c|c|}
 \hline
& &&&\\  $\mathbf{L_z}$ & $\mathbf{P_z}$ & $\mathbf{h_z}$ & \textbf{timelike surface}&  \textbf{helicoid} \\ &&&&\\
  \hline
   &&& &\\ $\dfrac{(1+\tau\,\mathsf{sin}\, z)}{2}$ & $\dfrac{(1-\tau\,\mathsf{sin}\, z)}{2}$ & $\dfrac{\mathsf{cos}\, z}{2}$ & $x_2=x_1\tan x_3$ & \text{of 1st kind}\\ & &&&
  \\
  \hline
    &&& &\\ $\dfrac{(\mathsf{cosh}\, z-\mathsf{sinh}\, z)}{2}$ & $\dfrac{(\mathsf{cosh}\, z+\mathsf{sinh}\, z)}{2}$ & $-\dfrac{\tau}{2}$ & $x_3=x_2\tanh x_1$ & \text{ of 2nd kind}\\ & &&&
  \\
  \hline
   &&& &\\ $\dfrac{\mathsf{cosh}\, z+\tau}{2}$ & $\dfrac{\mathsf{cosh}\, z-\tau}{2}$ & $\dfrac{\mathsf{sinh}\, z}{2}$ & $x_3=x_1\tanh x_2$ & \text{ of 2nd kind}\\ & &&&
  \\
  \hline
   &&& &\\ $\dfrac{\tau(\mathsf{cosh}\, z+\mathsf{sinh}\, z)}{2}$ & $\dfrac{\tau(\mathsf{cosh}\, z-\mathsf{sinh}\, z)}{2}$ & $\dfrac{\tau}{2}$ & $x_2=x_3\tanh x_1$ & \text{of 3rd kind}\\ & &&&
  \\
  \hline
  &&& &\\ $\dfrac{\tau(\mathsf{cosh}\, z+1)}{2}$ & $\dfrac{\tau(\mathsf{cosh}\,  z-1)}{2}$ & $\dfrac{\tau\, \mathsf{sinh}\,z}{2}$ & $x_1=x_3\tanh x_2$ & \text{ of 3rd kind}\\ & &&&
  \\
  \hline
   &&& &\\ $\dfrac{(z+1)^2}{2}$ & $\dfrac{(z-1)^2}{2}$ & $\dfrac{\tau (z^2-1)}{2}$ &  $x_2=\dfrac{(x_1-x_3)^2}{6}+\dfrac{x_3+x_1}{x_3-x_1}$ & \text{parabolic}\\ & &&&
  \\
  \hline
\end{tabular}\end{table}

\section{Construction of new minimal surfaces in $\L^{3}$}\label{final}
This section is devoted to the construction of minimal immersions in $\L^3$ starting from the Enneper data and using the Theorems~\ref{teo-spacelike} and \ref{teo-timelike}. Also, we explain how to
produce new examples of minimal surfaces starting from the Enneper data of others minimal surfaces in $\L^3$.

\subsection{Surfaces containing the involute of a circle as a pregeodesic}
First of all, we remember that a circle in $\L^3$ is the orbit of a point out of a straight line $\ell$ under a group of rotations in $\L^3$ that leave $\ell$ pointwise fixed (see \cite{LS}). Depending on the causal character of $\ell$, there are  (after an isometry of the ambient) three types of circles: Euclidean circles in planes parallel to the $x_1x_2$-plane,
Euclidean hyperbolas in planes parallel  to the $x_2x_3$-plane and
 Euclidean parabolas in planes parallel to the plane $x_2=x_3$.

\begin{example}
Let us consider the Enneper data
$${\mathcal D}_{\psi}^{\tiny{\C}}=\Big( \frac{z\,(1-\cosh z)}{2}, -\frac{z\,(1+\cosh z)}{2}, -\frac{z\,\sinh z}{2}\Big),$$ defined for all $z\in\C$, with $u\neq 0$. Applying Theorem~\ref{teo-spacelike} we obtain the spacelike minimal surface given by:
$$\begin{aligned}\psi(z)=(&\cosh u\,(\cos v+v\sin v)-u\cos v\sinh u,uv,\\ &\sinh u\,(\cos v+v\sin v)-u\cos v\cosh u).
\end{aligned}$$ We observe that this surface contains the spacelike curve
$$\psi(u,0)=(\cosh u-u\sinh u,0, \sinh u-u\cosh u), \qquad u\neq 0,$$ as a planar pregeodesic (see Figure~\ref{involute}) and, thanks to the results proved in \cite{ACM}, it's the only minimal surface in $\L^3$ which has this property. Also, the $u$-coordinate curve is the involute of the timelike circle $\alpha(u)=(\cosh u,0, \sinh u)$, with $u\neq 0$.
\end{example}

\begin{example}
Choosing the paracomplex Enneper data
$${\mathcal D}_{\psi}^{\tiny{\L}}=\Big( \frac{\tau z\,(1-\textsf{cosh} \,z)}{2}, -\frac{\tau z\,(1+\textsf{cosh}\,z)}{2}, -\frac{z\,\textsf{sinh}\, z}{2}\Big),$$ defined for all $z\in\L$, with $u\neq 0$, and using Theorem~\ref{teo-timelike}, we obtain the timelike minimal immersion given by:
$$\begin{aligned}\psi(z)=(&\cosh v\,(\sinh u-u\cosh u)-v\sinh v\sinh u,uv, \\&\cosh v\,(\cosh u-u\sinh u)-v\cosh u\sinh v).
\end{aligned}$$ This immersion is the only (see \cite{CDM}) minimal immersion in $\L^3$  that contains the timelike curve
$$\psi(u,0)=(\sinh u-u\cosh u,0, \cosh u-u\sinh u), \qquad u\neq 0,$$ as a planar pregeodesic (see Figure~\ref{involute}). This  curve is the involute of the spacelike circle $\alpha(u)=(\sinh u,0, \cosh u)$, with $u\neq 0$.
\end{example}

\begin{example}
In this example, we take  the Enneper data
$${\mathcal D}_{\psi}^{\tiny{\L}}=\Big( \frac{\,z\,(1+\tau\,\textsf{sin}\, z)}{2}, \frac{z\,(1-\tau\, \textsf{sin}\, z)}{2}, \frac{z\,\textsf{cos}\, z}{2}\Big),$$ defined for all $z\in\L$, with $v\neq 0$. From the Theorem~\ref{teo-timelike} we get the timelike minimal surface parametrized by:
$$\begin{aligned}
\psi(z)=(&\cos u\,(\cos v+v\sin v)+u\cos v\sin u,\\&\cos u\,(\sin v-v\cos v)+u\sin v\sin u,uv).
\end{aligned}$$ Note that this surface is the (only) minimal surface in $\L^3$  that contains the spacelike curve
$$\psi(0,v)=(\cos v+v\sin v,\sin v-v\cos v,0), \qquad v\neq 0,$$ as a planar pregeodesic (see Figure~\ref{involute}). This  curve is the involute of the spacelike circle $\alpha(v)=(\cos v,\sin v,0 ),$ with $v\neq 0$.
\end{example}

\begin{figure}[h!]
\begin{center}
\includegraphics[width=0.28\textwidth]{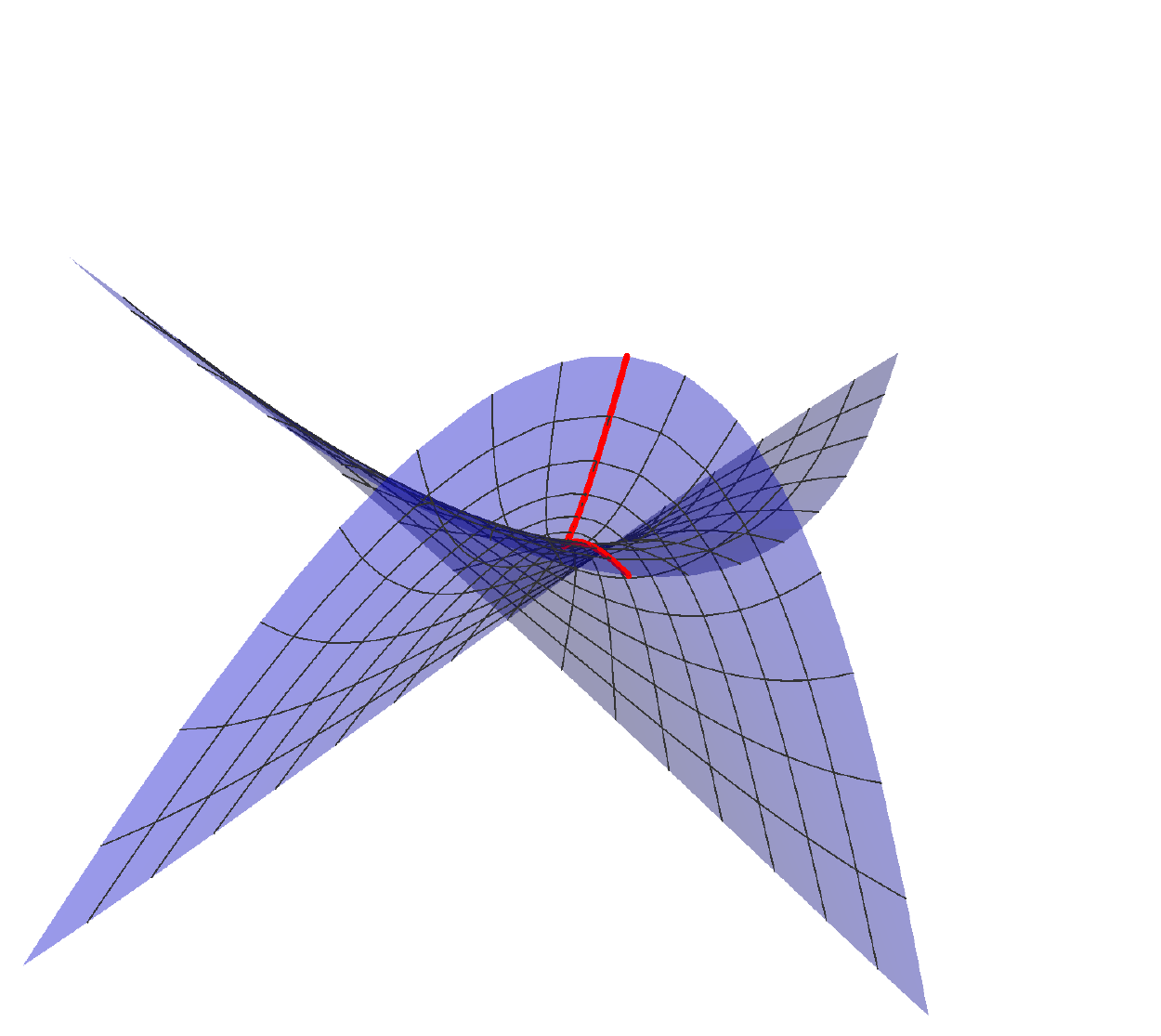}
\includegraphics[width=0.36\textwidth]{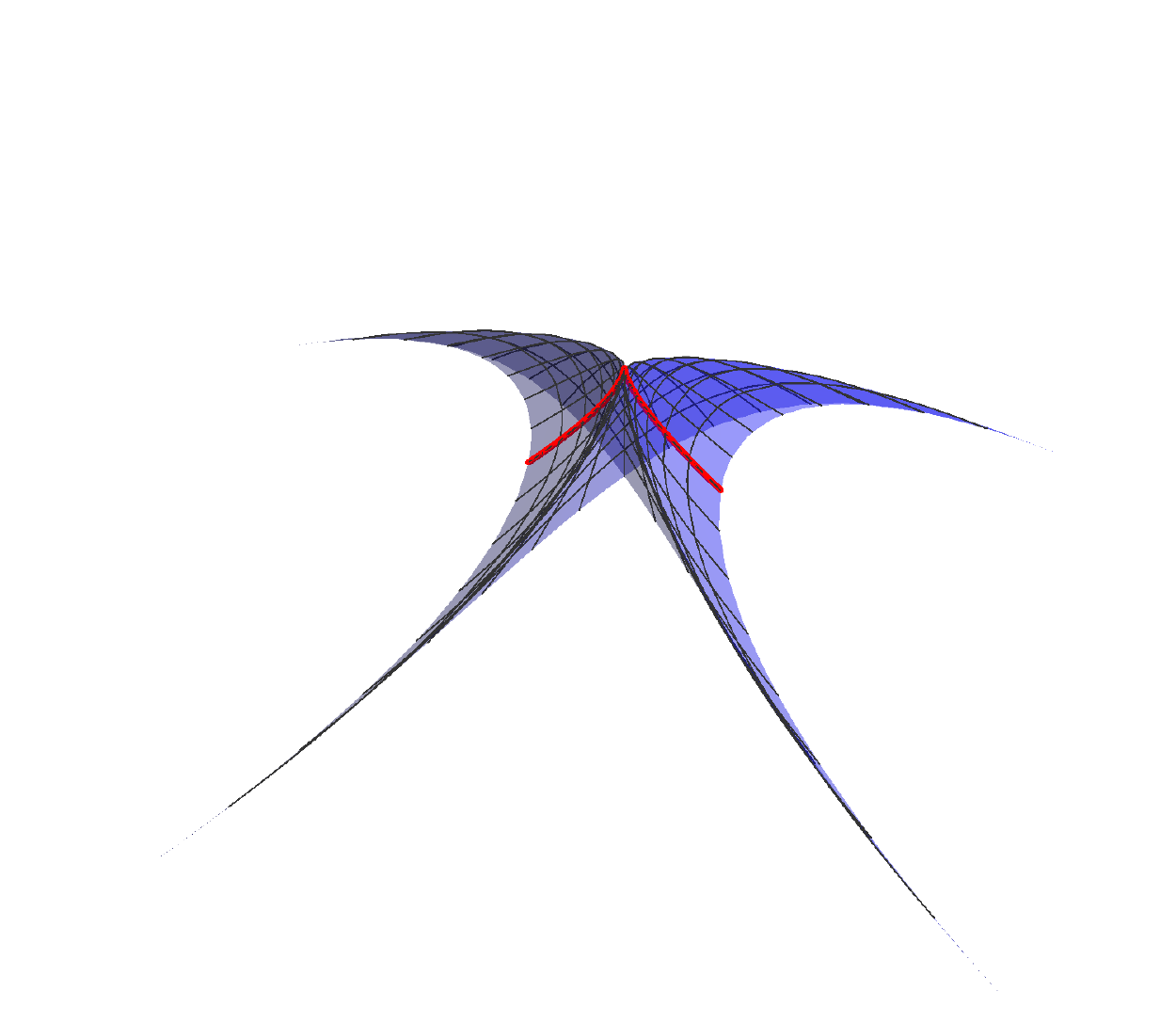}
\includegraphics[width=0.34\textwidth]{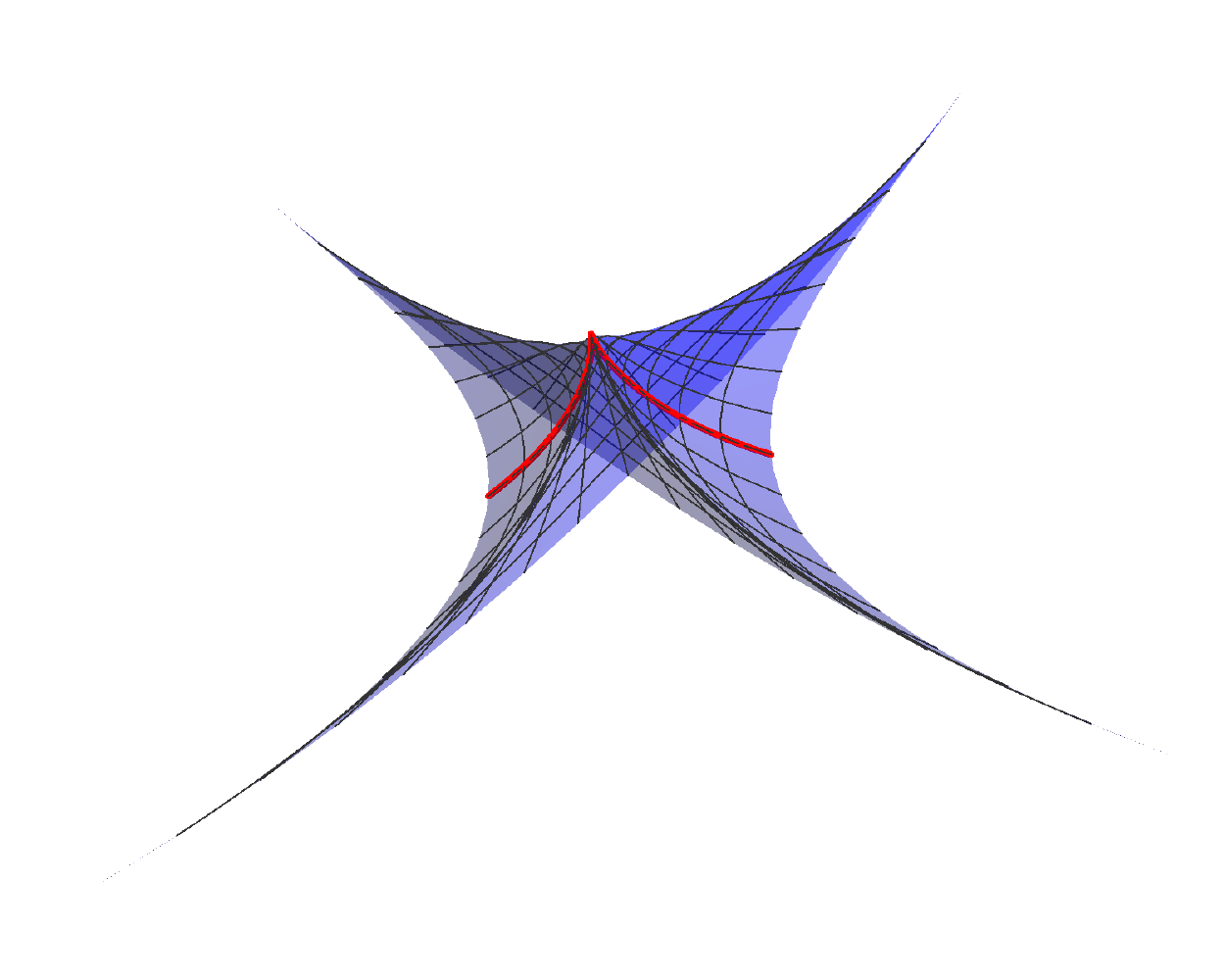}
\end{center}
\caption{Minimal surfaces in $\L^{3}$ containing the involute of the circles $x_1^2-x_3^{2}=1$, $x_3^2-x_1^{2}=1$ and $x_1^2+x_2^{2}=1$ (respectively) as planar pregeodesics.}
\label{involute}
\end{figure}

\subsection{Minimal surfaces in $\L^{3}$ obtained from others}\label{new}
We start this section observing that if ${\mathcal D}_{\psi}^{\tiny{\K}}=(L_z,P_z,h_z)$ are the Enneper data of a given spacelike (respectively, timelike) minimal immersion $\psi$ in $\L^{3}$ (defined in the simply connected domain $\Omega\subset\K$) and $f:\Omega\to\K$ is a $\K$-differentiable function so that $f(z)\overline{f(z)}\neq 0$ in $\Omega$, then $$f\,{\mathcal D}_{\psi}^{\tiny{\K}}=(f\, L_z,f\,P_z,f\, h_z)$$
 are  Enneper data of a new spacelike (respectively, timelike) minimal surface in $\L^{3}$. We note that this surface is the Enneper graph of the harmonic function $h_1:\Omega\subset\K\to\r$ defined by:
$$h_1(z)=h_1(z_0)+2\,\Re\int_{z_0}^z f(z)\,h_z(z)\, dz.$$ Also, the Enneper minimal immersion associated to $h_1$ is given by: 
\begin{equation}
\label{psi1}\psi_1=\left\{\begin{aligned}
&(L_1+\overline{P_1},h_1),\qquad \K=\C,\\
&(h_1,L_1-\overline{P_1}),\qquad \K=\L,
\end{aligned}\right.
\end{equation}
where
\begin{equation}
\label{l1p1}
L_1(z):=\int_{z_0}^z f(z)\,L_z(z)\,dz,\qquad
P_1(z):=\int_{z_0}^z f(z)\,P_z(z)\,dz,
\end{equation}
are well-defined $\K$-holomorphic functions in $\Omega$.

In the following, we will use this observation and the Enneper data of the tables given in the Sections~\ref{four} and \ref{five} to construct some examples of minimal surfaces in $\L^3$.

\begin{example}[Timelike Catalan surface of 1st kind]\label{cicloide}
We consider the Enneper data of the timelike helicoid of 1st kind (see Table~\ref{tab4}) and we choose the paracomplex fuction $f(z)=2\,\textsf{sin}\,z$, with $(u,v)$ such that $0<|\sin u|\neq |\sin v|$. Then, using \eqref{l1p1}, the timelike minimal surface obtained from the new Enneper paracomplex data:
$${\mathcal D}_{\psi}^{\tiny{\L}}=(\textsf{sin}\,z\,(1+\tau\,\mathsf{sin}\, z),\textsf{sin}\,z\,(1-\tau\,\mathsf{sin}\, z),\textsf{sin}\,z\,\textsf{cos}\,z)$$
is parametrized by:
$$\psi(u,v)=\Big(-\frac{\cos (2u)\,\cos (2v)}{2},v-\frac{\cos (2u)\,\sin(2v)}{2},2\sin u\,\sin v\Big).$$
We observe that this surface has the notable property of containing an arc of the spacelike cycloid given by $\psi(0,v)$, $v\neq 0$, as a planar pregeodesic (see Figure~\ref{fig:ciclo}). So, we call it {\it timelike Catalan surface of the first kind} and we point out that in \cite{ACM} Al\'{i}as et al. construct a spacelike Catalan surface via the B\"{o}rling problem.
\end{example}

\begin{example}[Timelike Catalan surface of 2nd kind]
In this example, we start from the Enneper data of the timelike helicoid of 3rd kind (see Table~\ref{tab4}), that are defined for all $z\in\L$, and we consider the new Enneper paracomplex data:
$${\mathcal D}_{\psi}^{\tiny{\L}}=(\tau\,\textsf{sinh}\,z\,(\mathsf{cosh}\, z+1),\tau\,\textsf{sinh}\,z\,(\mathsf{cosh}\, z-1),(\textsf{sinh}\,z)^2),$$
with $z\in\L$ such that $z\overline{z}\neq 0$. In this case, from \eqref{l1p1} we obtain the timelike surface parametrized by:
$$\psi(u,v)=\Big(\frac{\cosh (2v)\,\sinh(2u)}{2}-u,2\sinh u\,\sinh v,\frac{\cosh (2u)\,\cosh (2v)-1}{2}\Big),$$
that contains an arc of the timelike cycloid $\psi(u,0)$, $u\neq 0$, as a planar pregeodesic (see Figure~\ref{fig:ciclo}). We call it {\it timelike Catalan surface of the second kind}.
\begin{figure}[h!]
        \centering
        \begin{minipage}[c]{.38\textwidth}
          \centering
          \includegraphics[width=.85\textwidth]{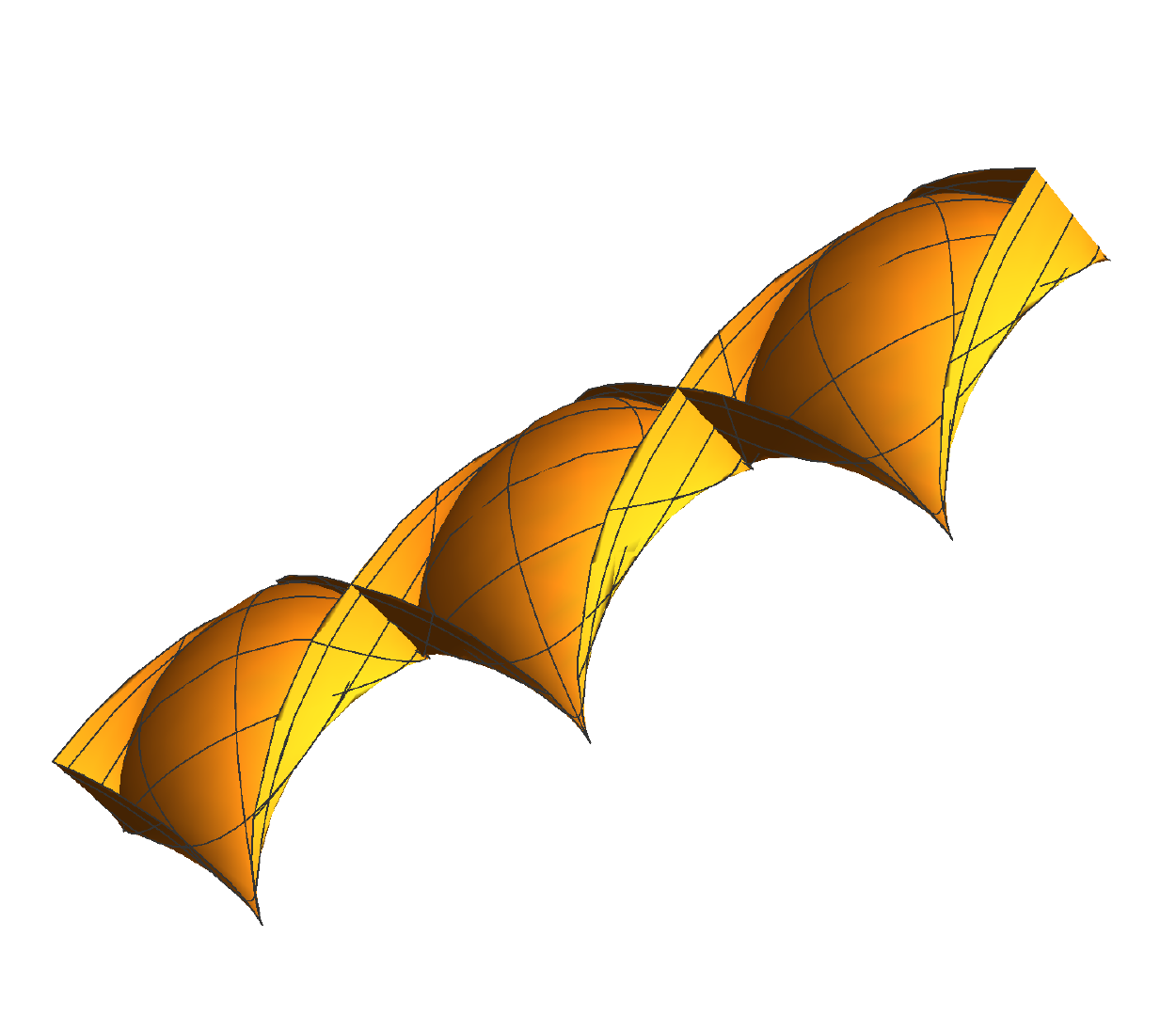}
        \end{minipage}%
        \hspace{10mm}%
        \begin{minipage}[c]{.38\textwidth}
          \centering
          \includegraphics[width=.85\textwidth]{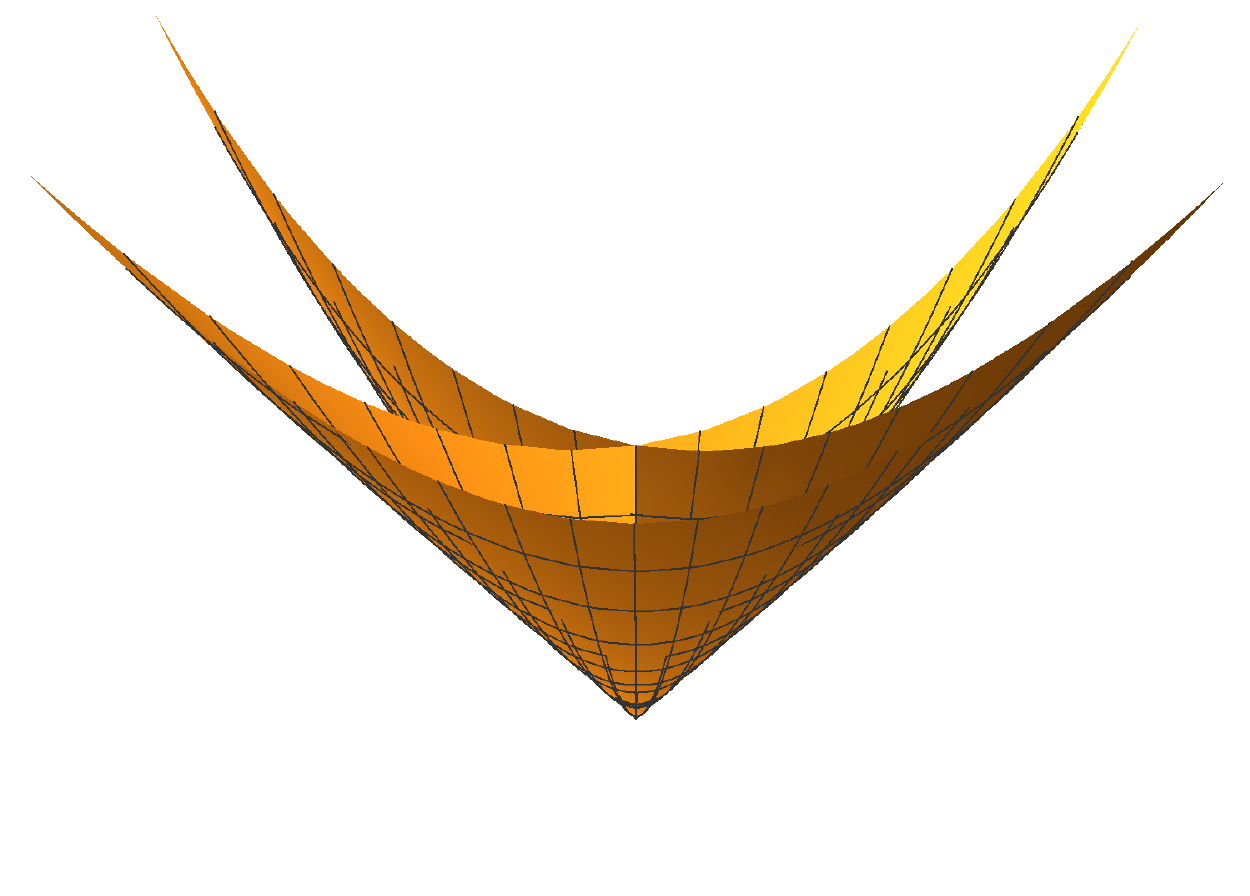}
        \end{minipage}
        \caption{\small Timelike Catalan surfaces of 1st kind and 2nd kind, respectively.\label{fig:ciclo}}
      \end{figure}
\end{example}

\begin{example}\label{altro}
Starting from the Enneper data of the spacelike hyperbolic catenoid (see the Table~\ref{tab1}) and choosing the complex function $f(z)=2\,\cos z$, with $u\in (-\pi/2,\pi/2)$, we obtain the new Enneper complex data:
$${\mathcal D}_{\psi}^{\tiny{\C}}=(\cos z\,(1+\cos z),\cos z\,(1-\cos z),-\sin z\,\cos z).$$
From \eqref{psi1} and \eqref{l1p1}, the associated spacelike minimal immersion is given by
$$\psi(u,v)=\Big(2\sin u\,\cosh v,v+\frac{\cos (2u)\,\sinh (2v)}{2},\frac{\cos (2u)\,\cosh(2v)-1}{2}\Big),$$
with $u\in (-\pi/2,\pi/2)$, and 
it intersects orthogonally the plane $x_2=0$ along the spacelike parabola $$\psi(u,0)=(2\sin u,0,-(\sin u)^2),\quad u\in (-\pi/2,\pi/2).$$ Then, this curve is a planar pregeodesic of the surface (see Figure~\ref{fig:se36}).
\begin{figure}[h!]
\centering\includegraphics[width=0.26\textwidth]{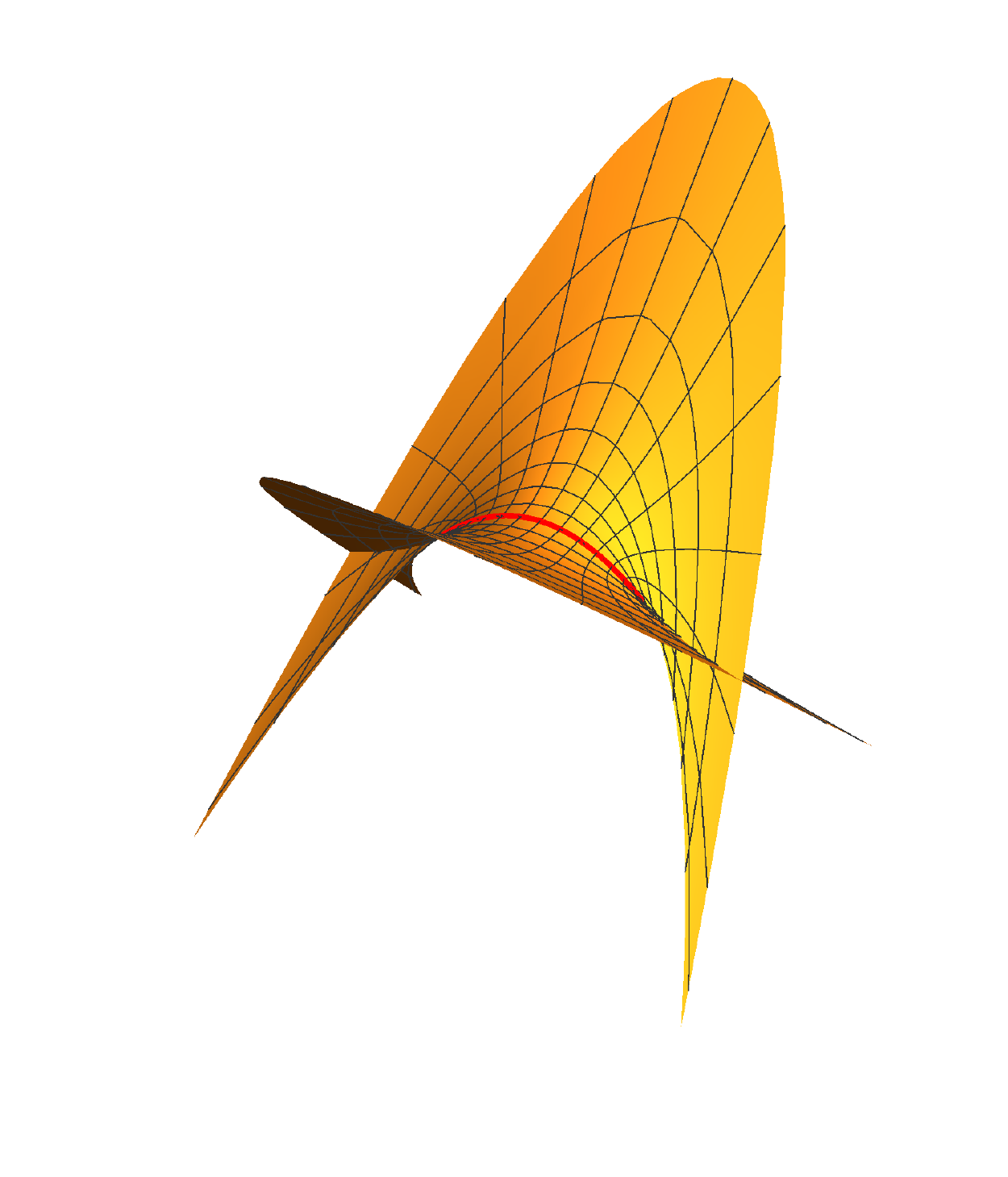}
\caption{\small Spacelike minimal surface in $\L^{3}$ containing a parabola as a pregeodesic.}\label{fig:se36}
\end{figure}
\end{example}

\subsection{A special family of minimal surfaces in $\L^3$}
Next we are going to produce a family of Lorentzian minimal surfaces in $\L^3$ whose origins are rooted in the Example~\ref{cicloide}, given in the previous section. Inspired by this example, we consider 
$n\in\mathbb{Z}$, $n>1$, and the family of paracomplex Enneper data given by:
$${\mathcal D}_{\psi_n}^{\tiny{\L}}=\Big(\textsf{sin}\, z\, (1+\tau\, \textsf{sin}\,(n z)), \textsf{sin}\, z\, (1-\tau\, \textsf{sin}\,(n z)),\textsf{sin}\, z\, \textsf{cos}\, (nz)\Big).$$
In this case, using \eqref{psi1} and \eqref{l1p1},
we obtain the following family of timelike minimal surfaces
$$\begin{aligned}\psi_n(u,v)=\Big(&\frac{\cos[ (n-1)u]\, \cos[ (n-1)v]}{n-1}-\frac{\cos[ (n+1)u]\, \cos[ (n+1)v]}{n+1},\\
&\frac{\cos[ (n-1)u]\, \sin[ (n-1)v]}{n-1}-\frac{\cos[ (n+1)u]\, \sin[ (n+1)v]}{n+1},\\&2\sin u\sin v\Big),
\end{aligned},$$
where $u\in (-\pi/4n,\pi/4n)$ and $v\in (\pi/4n,3\pi/4n)$.
Given  $n\in\mathbb{Z}$, $n>1$, we have that $\psi_n(u,v)$ is the only  minimal immersion into $\L^{3}$ containing the spacelike curve
$\alpha_n(v):=\psi_n(0,v)$, as a planar pregeodesic. If we consider the change of parameter $t=(n-1)\, v$, we have that 
$$\alpha_n(t)=\Big(\frac{\cos t}{n-1}-\frac{\cos \big(\frac{n+1}{n-1}\,t\big)}{n+1},\frac{\sin t}{n-1}-\frac{\sin \big(\frac{n+1}{n-1}\,t\big)}{n+1},0\Big),$$
that is an epycicloid  traced by a point on a circle of radius $r=1/(n+1)$ which rolls externally on a circle of radius $R=2/(n^{2}-1)$. We observe that if $n=2$, then $R=2r$, therefore  the curve $\alpha_2$ is an arc of a nephroid. Also, if $n=3$ we have that $R=r$ and, then, the curve $\alpha_3$ is an arc of a cardioid.
\begin{figure}[h!]
\begin{center}\includegraphics[width=0.23\textwidth]{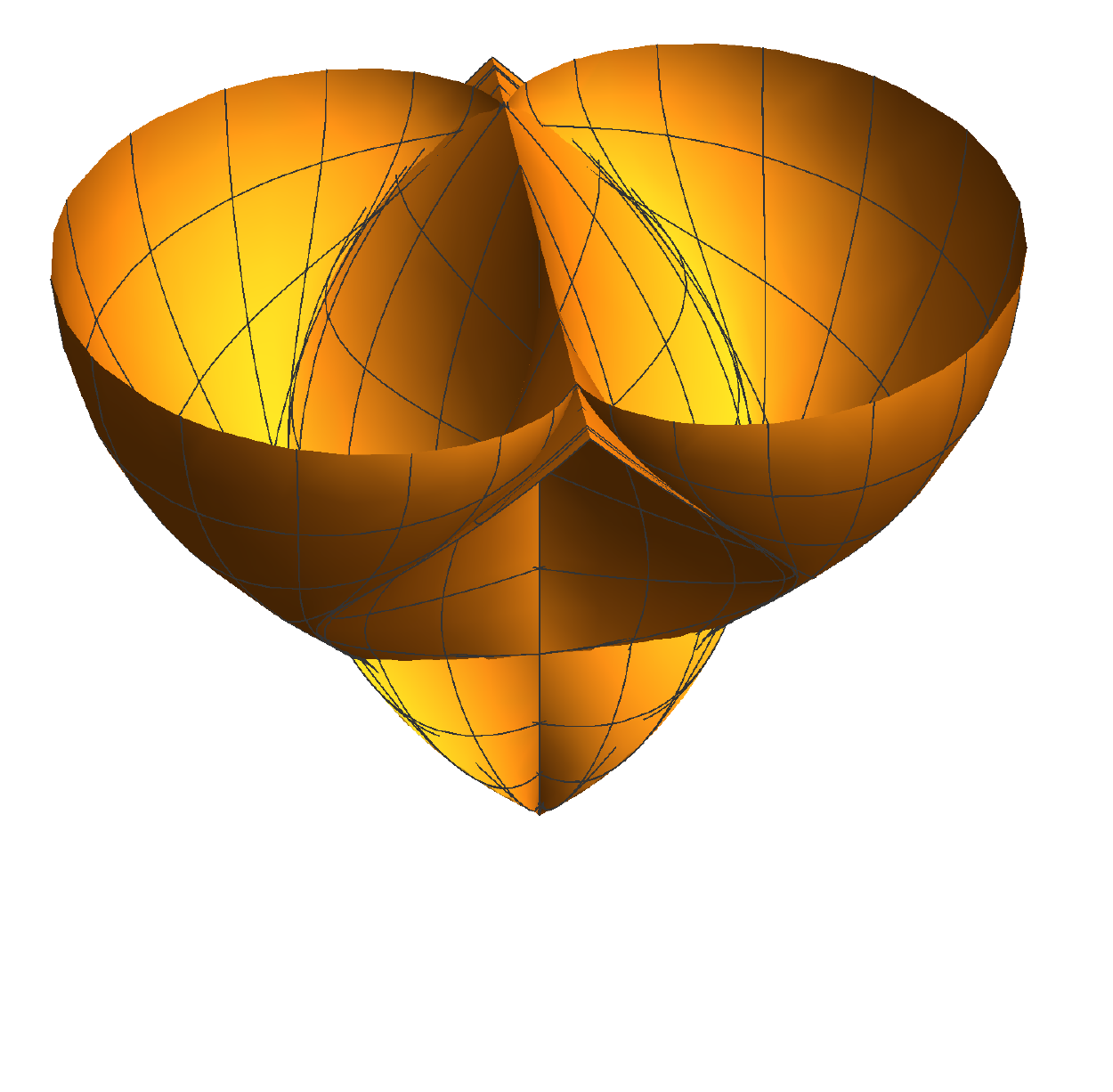}
\hspace{1.7cm}
\includegraphics[width=0.18\textwidth]{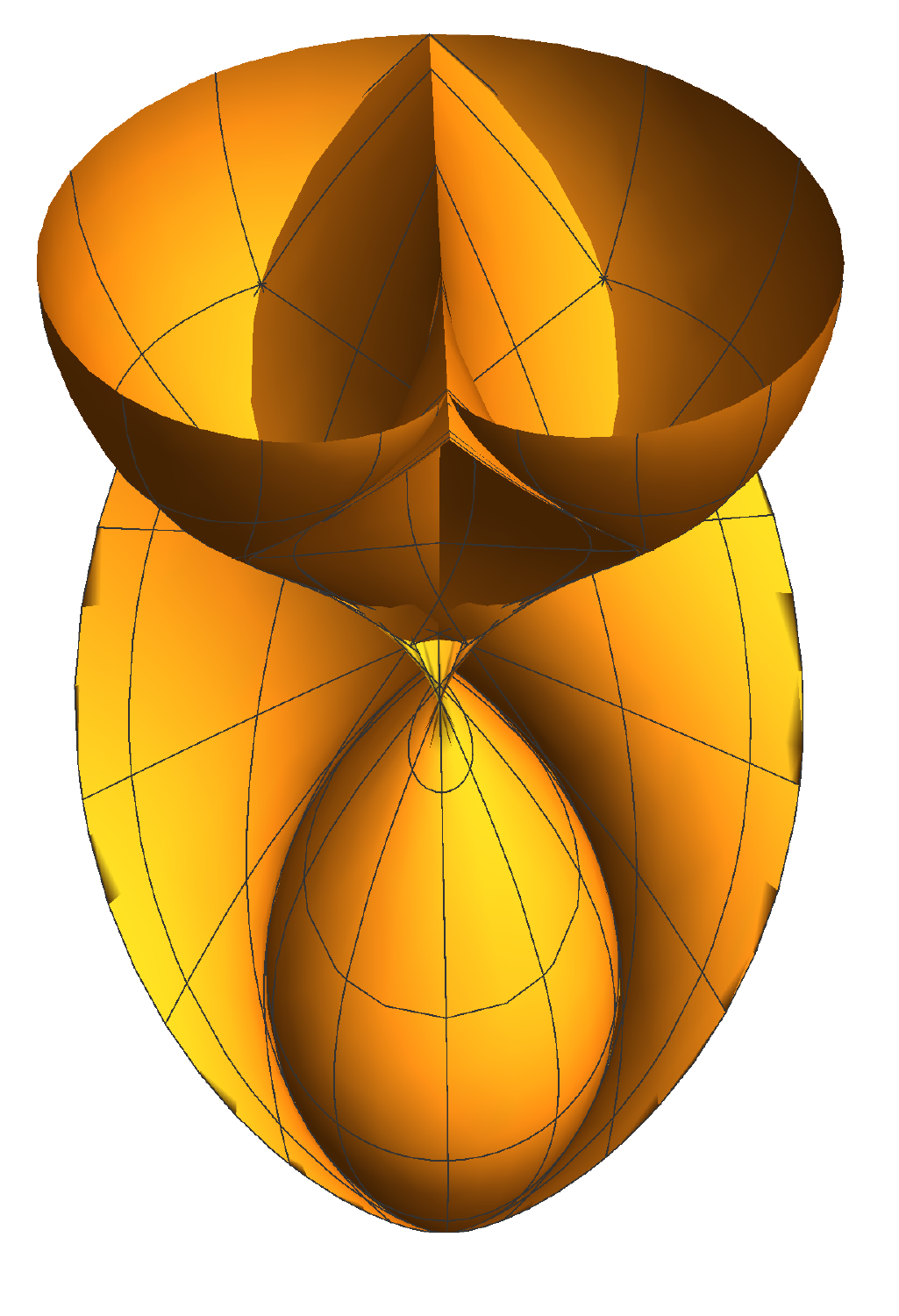}
\hspace{1.7cm}
\includegraphics[width=0.18\textwidth]{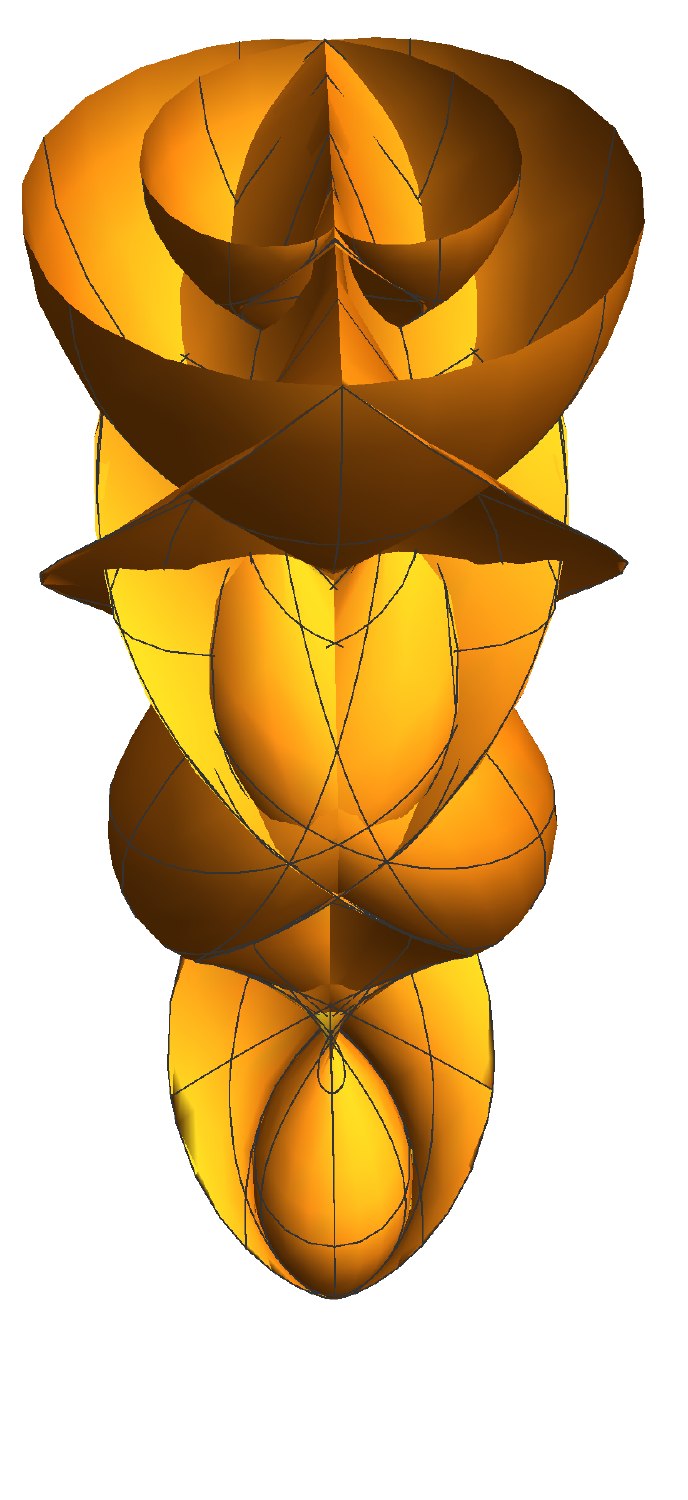}
\end{center}\caption{Timelike minimal surfaces in $\L^{3}$ containing a nephroid, a cardioid and the epicycloid for $n=5$ (respectively) as a pregeodesic.}
\label{fig:epici}
\end{figure}

\end{document}